\newcommand{\f}{\operatorname{f}}
\DeclareMathOperator\coker{coker}
\newcommand{\C}{\mathbb{C}}
\newcommand{\F}{\mathbb{F}}
\newcommand{\cE}{\mathcal{E}}
\newcommand{\A}{\mathbb{A}}
\renewcommand{\L}{\mathbb{L}}
\newcommand{\bbP}{\mathbb{P}}
\newcommand{\Z}{\mathbb{Z}}
\newcommand{\Q}{\mathbb{Q}}
\newcommand{\cA}{{\mathcal A}}
\newcommand{\cI}{{\mathcal I}}
\newcommand{\Spec}{\operatorname{Spec}\,}
\newcommand{\Span}{\operatorname{Span}}
\newcommand{\Hom}{\operatorname{Hom}}
\newcommand{\Gal}{\mathrm{Gal}}
\newcommand{\PGL}{\mathrm{PGL}}
\newcommand{\GL}{\mathrm{GL}}
\newcommand{\SL}{\mathrm{SL}}
\newcommand{\Tr}{\mathrm{Tr}}
\newcommand{\Sym}{\mathrm{Sym}}
\newcommand{\End}{\mathrm{End}}
\newcommand{\Var}{\mathrm{Var}}
\newcommand{\Res}{\mathrm{Res}}
\newcommand{\K}{K_l^{\mathrm{sp}}}
\newcommand{\one}{\bar\F_\ell}
\newcommand{\indec}{\mathbf{W}}
\newcommand{\mtot}{\mu_\ell^{\mathrm{}}}
\newcommand{\ntot}{\nu_\ell^{\mathrm{}}}
\newcommand{\Cor}{\operatorname{Cor}}
\newcommand{\Chow}{\operatorname{Chow}}
\newcommand{\et}{}
\renewcommand{\ell}{l}
\newcommand{\hmod}{\hbox{-$\mathrm{mod}$}}
\newcommand{\hvect}{\hbox{-$\mathrm{vect}$}}
\newcommand{\cycl}{\operatorname{cl}}
\newcommand{\bbC}{{\mathbb C}}
\newcommand{\bbZ}{{\mathbb Z}}
\renewcommand{\bbP}{{\mathbb P}}
\newcommand{\bbQ}{{\mathbb Q}}
\newcommand{\isom}{\xrightarrow{\sim}}
\newtheorem{thm}{Theorem}[section]
\newtheorem{example}[thm]{Example}
\newtheorem{cor}[thm]{Corollary}
\newtheorem{prop}[thm]{Proposition}
\newtheorem{lemma}[thm]{Lemma}
\newtheorem{lem}[thm]{Lemma}
\newtheorem{defn}[thm]{Definition}
\newtheorem*{claim*}{Claim}
\numberwithin{equation}{section}
\begin{document}

\title{Irrationality of Motivic Zeta Functions}

\author{Michael J. Larsen}
\address{Department of Mathematics,
Indiana University,
Bloomington, IN
47405,
U.S.A.}

\author{Valery A. Lunts}
\address{Department of Mathematics,
Indiana University,
Bloomington, IN
47405,
U.S.A.}

\thanks{ML was partially supported by NSF grant DMS-1702152.}

\begin{abstract}
Let $K_0(\Var_{\Q})[1/\L]$ denote the Grothendieck ring of $\Q$-varieties with the Lefschetz class inverted.
We show that there exists a K3 surface $X$ over $\Q$ such that the motivic zeta function $\zeta_X(t) := \sum_n [\Sym^n X]t^n$
regarded as an element in $K_0(\Var_{\Q})[1/\L][[t]]$ is not a rational function in $t$, thus disproving a conjecture of
Denef and Loeser.
\end{abstract}

\maketitle

\section{Introduction}

Let $k$ be a field.  We denote by $K_0(\Var_k)$ the Grothendieck group of $k$-varieties, i.e., the free abelian group generated by isomorphism classes of $k$-varieties modulo the cutting-and-pasting relations $[X] = [Y] + [X\setminus Y]$ for all pairs $(X,Y)$ consisting of a variety $X$ and a closed subvariety $Y$.  It is endowed with a commutative ring structure characterized by $[X]\,[Y] = [X\times Y]$.  (Note that we use \emph{variety} to mean reduced separated scheme of finite type over $k$, but the Grothendieck ring would not be changed if we allowed non-reduced schemes or non-separated schemes, or limited ourselves to affine schemes.)

Following Kapranov \cite{Ka}, we define the \emph{motivic zeta function} 
$$\zeta_X(t) :=\sum_{n=0}^\infty [\Sym^n X]\,t^n\in K_0(\Var_k)[[t]],$$
where $\Sym^n X$ is the symmetric $n$th power $X^n/\Sigma_n$.

By a \emph{motivic measure}, we mean a homomorphism $\mu\colon K_0(\Var_k)\to A$, where $A$ is a commutative ring.
We write $\mu(\zeta_X(t))$ for the image of $\zeta_X(t)$ in $A[[t]]$.
If $k$ is a finite field, $\mu\colon [X]\mapsto |X(k)|$ defines a motivic measure with values in $\Z$.  The image 
$\mu(\zeta_X(t))\in \Z[[t]]$ is the usual zeta function of $X$ and therefore rational as a function of $t$
by Dwork's theorem \cite{Dw}.
Kapranov asked \cite[1.3.5]{Ka}
whether this rationality holds for the motivic zeta function itself.  He proved that this is so when $X$ is a curve with at least one $k$-point, even if $k$ is not a finite field.
(Since $K_0(\Var_k)$ is not an integral domain \cite{Po}, there is a question exactly what this means, which we settle for the purposes of this paper by saying that $\zeta_X(t)$ rational means that there exists a polynomial $B(t) = 1+b_1t+\cdots + b_nt^n$
such that $B(t)\zeta_X(t)\in K_0(\Var_k)[t]$.)

In \cite{LL1}, we proved that in general $\zeta_X(t)$ is \emph{not} rational when $X$ is a surface.  
This does not quite finish the question, since for many purposes (especially motivic integration), the natural object to consider is not $K_0(\Var_k)$ but $K_0(\Var_k)[1/\L]$, where
$\L := [\A^1]$.  It is known  \cite{Bo} that $\L$ is a zero-divisor; see, also, \cite{Za}, for an analysis of the annihilator of $\L$.  One might still hope, therefore, that 
$\zeta_X(t)$ may be rational as a power series over $K_0(\Var_k)[1/\L]$.  No variant of the method of \cite{LL1} can possibly test this, since the motivic measures constructed in
that paper are birationally invariant and therefore vanish on $\L$.  This made possible the conjecture of Denef and Loeser 
\cite[Conjecture 7.5.1]{DL} predicting that $\zeta_X(t)$ should satisfy this weaker rationality condition.
In this paper, we show that in general it does not.

To explain our strategy, we begin by discussing certain motivic measures which \emph{cannot} detect the irrationality of zeta functions.
A reference for the following discussion is \cite{LL2}.
We endow $K_0(\Var_k)$ with the  $\lambda$-structure
in which the $[X]\to [\Sym^n X]$ operations play the role of symmetric powers; in other words, $\lambda^n([X])$ is defined to be the $t^n$ coefficient of $\zeta_X(t)^{-1}$.  
If $A$ is a finite $\lambda$-ring (in the sense that every element $a\in A$ 
can be written $a = b-c$ where $\lambda^n b = \lambda^n c=0$ for $n$ sufficiently large), then every $\lambda$-homomorphism $\mu\colon K_0(\Var_k)\to A$ is a motivic measure for which $\mu(\zeta_X(t))$ is rational for all $X/k$.

Here is an example.  Let $K(G_k,\Q_\ell)$ denote the Grothendieck ring of (virtual) finite-dimensional continuous representations of $G_k$, where, as usual, $0\to V_1\to V_2\to V_3\to 0$ implies $[V_2] = [V_1] + [V_3]$.  Then $K(G_k,\Q_\ell)$ is a $\lambda$-ring (even a special $\lambda$-ring), and
$$[X]\mapsto \sum_{i=0}^{2\dim X} (-1)^i [H^i(\bar X,\Q_\ell)],$$
where $H^i(\bar X,\Q_\ell)$ denotes the $i$th $\ell$-adic \'etale cohomology group of $\bar X$ as $G_k$-representation,
defines a ring homomorphism $\mu$.
It is a consequence of the K\"unneth formula and the isomorphism 
$$H^i(\Sym^n \bar X,\Q_\ell)\isom H^i(\bar X^n,\Q_\ell)^{\Sigma_n}$$
that $\mu$ is a $\lambda$-homomorphism.  Thus $\mu(\zeta_X(t))$ is rational in $t$ for all $X$, where the degree of numerator and denominator depend only on the dimension of the cohomology of $\bar X$.

In particular, if $X$ is a K3 surface, then
$\mu(\zeta_X(t))^{-1}$ is a polynomial of degree $24$, the product of a degree $22$ polynomial corresponding to the $H^2$-term  and the factors $(1-t)(1-\mu(\L)^2t)$, corresponding to the $H^0$ and $H^4$ terms.  We consider K3 surfaces of Picard number 18, in which the $H^2$ factor further decomposes 
$(1-\mu(\L)t)^{18}\Lambda(t)$.

We modify this construction in three ways.  First, we consider coefficients in $\one$ instead of $\Q_\ell$.  Second, we use a modified Grothendieck ring 
$\K(G_k)$ of Galois representations, in which we identify $[V_2]$ with $[V_1]+[V_3]$ only when $V_2\cong V_1\oplus V_3$ as $G_k$-modules.  This is essential since the 
essence of our construction is to distinguish $\one$-valued Galois representations which have the same semisimplification.
Third, we replace $k$ by $k(\zeta_\ell)$ in order to trivialize the cyclotomic character $G_k\to \F_\ell^\times$ (so that $\L$ maps to $1$.)
Up to the $t^\ell$ coefficient, everything works as before, but the expression for $\mu(\zeta_X(t))$ as rational function breaks down at the $t^\ell$ coefficient.  No one $\ell$ value necessary excludes the possibility of rationality but by taking values of $\ell$ tending to infinity, we can prove that $\zeta_X(t)$ cannot be rational.

Assuming the characteristic of $k$ is $0$, we can define $\ntot$ so that for every non-singular projective $k$-variety $X$, we have $\ntot([X]) = [H^\bullet(\bar X,\one)]$ in the Grothendieck ring $\K(G_{k(\zeta_\ell)})$.
It is easy to calculate the semisimplification of $H^\bullet(\overline{\Sym^n X},\one)$ as $G_{k(\zeta_\ell)}$-representation, but as $\Sym^n X$ is in general singular, we do not know
when 
$$\ntot([\Sym^n X]) = [\Sym^n H^\bullet(\bar X,\one)].$$
However, we show that this holds when all the cohomology of $X$ is in even degree and $\ell$ is sufficiently large compared to $n$.
If $\ell$ is large compared to the degrees of the numerator and denominator of $\zeta_X(t)$, then the linear recurrence satisfied by the $\ntot([\Sym^i X])$ ultimately implies
that $\ntot([\Sym^\ell X])$ is non-effective.  This is a result of the breakdown of the correspondence between the (mod $\ell$) representation theory of $\SL_2(\F_\ell)$
and the complex representation theory of $\SL_2(\C)$ which occurs in dimension $\ell$.

Unfortunately, we do not know how to compute the value $\ntot([\Sym^\ell X])$ directly, but using a generalization to arbitrary fields of
G\"ottsche's relation \cite{Go2} in $K_0(\Var_{k})$ between the classes $[X^{[i]}]$ of the Hilbert schemes of $X$ and the classes of the symmetric powers of $X$, we can show that $\ntot([X^{[\ell]}])$ is also non-effective.  This is absurd, since $X^{[\ell]}$ is projective and non-singular.

In \S2, we discuss Grothendieck groups of representations of finite groups, especially $\SL_2(\F_\ell)$ and
$\SL_2(\F_\ell)^2$.  In \S3, we use the method of Bittner \cite{Bi} to construct $\ntot$. 
In \S4, we show that there exists a K3 surface over $\Q$ with the desired Galois-theoretic properties.   
The generalization of G\"ottsche's theorem to arbitrary base field is given in \S5.
In \S6, we discuss some variants of the category of Chow motives which enable us to show that if $\ell$ is large compared to $n$, $\Sym^n X$ behaves like a non-singular variety as far as $\ntot$ is concerned.  The proof of the main theorem is in \S7.

The referee of an earlier version of this paper called our attention to the preprint \cite{Bon}
of Mikhail Bondarko establishing that
Joseph Ayoub's announced proof \cite{Ayoub1,Ayoub2} of the conservativity conjecture implies
the rationality of the motivic zeta-function of any variety in characteristic $0$ with values in the K-group of numerical motives.
We would like to thank the referee for this and many other helpful suggestions.

We would like to gratefully acknowledge helpful conversations with Pierre Deligne, Vladimir Drinfeld, Lothar G\"ottsche, Luc Illusie, Mircea Musta\c{t}\u{a}, and Geordie Williamson. 

\section{Grothendieck rings of representations}

We fix an odd prime $\ell$ and an algebraic closure $\bar\F_\ell$ of the prime field $\F_\ell$, which we regard as a space with the discrete topology.  For any topological group $G$, we denote by $\K(G)$ the Grothendieck ring of the exact category  given by \emph{split} short exact sequences of finite dimensional continuous $\bar\F_\ell[G]$-modules.  

We claim that, as an additive group, $\K(G)$ is the free $\Z$-module on indecomposable continuous $\bar\F_\ell[G]$-modules. 
To see this, recall \cite{Kr} that an additive category is \emph{Krull-Schmidt} if every object is a finite direct sum of indecomposable objects whose endomorphism rings are local.
As every finite-dimensional $G$-module has finite length, the category of such modules is Krull-Schmidt \cite[\S5]{Kr}.

By the Krull-Remak-Schmidt theorem, this implies that the factors appearing in any decomposition into indecomposables, together with their multiplicities, are uniquely determined. 
We say an element of $\K(G)$ is \emph{effective} if it is a non-negative linear combination of  indecomposable classes.

Any continuous homomorphism $G\to H$ induces a restriction homomorphism $\K(H)\to \K(G)$, which maps effective classes to effective classes.  If $G\to H$ is surjective, then 
$\Res_G^H$ is injective because distinct indecomposable representations restrict to distinct  indecomposable representations of $G$.  A class in $\K(H)$ is effective if and only if its restriction to $\K(G)$ is so.

If $G$ is profinite, $\K(G)$ is the direct limit of $\K(G/H)$ as $H$ ranges over open normal subgroups of $G$.
In this section, we consider only finite groups $G$ endowed with the discrete topology, so the continuity condition will play no role.

If $V$ is a $G$-representation, we define $\zeta_V(t)\in \K(G)[[t]]$ as follows:
$$\zeta_V(t) = \sum_{n=0}^\infty [\Sym^n V]\,t^n,$$
where $\Sym^n V$ denotes the space of $\Sigma_n$-coinvariants of the tensor product $V^{\otimes n}$.
Note that the notation makes sense only if $G$ is given.

\begin{prop}
If $H_1$ and $H_2$ are finite groups, external tensor product defines an injective homomorphism $T\colon \K(H_1)\otimes \K(H_2)\to \K(H_1\times H_2)$.
\end{prop}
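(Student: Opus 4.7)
The plan is to use the Krull-Schmidt description of $\K(G)$ stated just above to reduce injectivity of $T$ to a statement about external tensor products of indecomposable $\bar\F_\ell$-representations.

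Since $\K(H_1)\otimes_\Z \K(H_2)$ is the free $\Z$-module on pairs $([U],[V])$, with $U$ and $V$ ranging over isomorphism classes of indecomposable finite-dimensional $\bar\F_\ell[H_1]$-modules and $\bar\F_\ell[H_2]$-modules respectively, it suffices to prove two things: (a) for every such pair, the external tensor product $U\boxtimes V$ is indecomposable as an $\bar\F_\ell[H_1\times H_2]$-module, and (b) distinct pairs give non-isomorphic $U\boxtimes V$. Then $T$ carries a $\Z$-basis of the domain to a subset of a $\Z$-basis of the codomain, and injectivity is immediate.

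For (a), I would compute $\End_{H_1\times H_2}(U\boxtimes V) \cong \End_{H_1}(U)\otimes_{\bar\F_\ell}\End_{H_2}(V)$. Since $U$ and $V$ are indecomposable and the category is Krull-Schmidt, each endomorphism ring is a finite-dimensional local $\bar\F_\ell$-algebra with residue field $\bar\F_\ell$ (the residue field must coincide with $\bar\F_\ell$ because $\bar\F_\ell$ is algebraically closed). The tensor product of two such local algebras is again local: the ideal $\mathfrak{m}_U\otimes\End(V)+\End(U)\otimes\mathfrak{m}_V$ is a nilpotent two-sided ideal with quotient $\bar\F_\ell\otimes_{\bar\F_\ell}\bar\F_\ell=\bar\F_\ell$, so $\End_{H_1\times H_2}(U\boxtimes V)$ is local, and hence $U\boxtimes V$ is indecomposable.

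For (b), suppose $U_1\boxtimes V_1\cong U_2\boxtimes V_2$ as $\bar\F_\ell[H_1\times H_2]$-modules. Restricting to the subgroup $H_1\times\{1\}$ gives $V_i$-many copies of $U_i$, i.e.\ $U_i^{\dim V_i}$. By the Krull-Remak-Schmidt theorem applied in the category of $\bar\F_\ell[H_1]$-modules, we conclude $U_1\cong U_2$ and $\dim V_1=\dim V_2$; a symmetric restriction to $\{1\}\times H_2$ yields $V_1\cong V_2$. The main (but minor) obstacle is (a), namely verifying that the tensor product of local algebras stays local; everything else follows formally from Krull-Schmidt and the freeness description of $\K(G)$.
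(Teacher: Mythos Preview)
Your proposal is correct and follows essentially the same approach as the paper's proof: both reduce to showing that external tensor products of indecomposables are indecomposable (via locality of the endomorphism ring) and that the pair $(U,V)$ is recovered from $U\boxtimes V$ by restricting to the factor subgroups and applying Krull--Remak--Schmidt. The only cosmetic differences are that the paper explicitly derives the isomorphism $\End_{H_1\times H_2}(U\boxtimes V)\cong \End_{H_1}(U)\otimes\End_{H_2}(V)$ by intersecting centralizers, and cites \cite{La} for the locality of the tensor product, whereas you assert the endomorphism isomorphism directly and give the short nilpotent-radical argument for locality yourself.
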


\begin{proof}
We need to show that if $\rho_1\colon H_1\to \GL(V_1)$ and $\rho_2\colon H_2\to \GL(V_2)$ are indecomposable representations, then $\rho_{12}\colon H_1\times H_2\to \GL(V_1\boxtimes V_2)$ is an indecomposable representation of $H_1\times H_2$ and that moreover, the isomorphism class of the representation $V_1\boxtimes V_2$ determines the isomorphism classes of $V_1$ and $V_2$.  The second claim follows immediately by applying Krull-Remak-Schmidt to the restriction of $V_1\boxtimes V_2$ to $G_1\times \{1\}$ and $\{1\}\times G_2$.

To prove that $\rho_{12}$ is indecomposable, it suffices to prove that the centralizer $Z_{12}$ of the $\bar\F_\ell$-span of $\rho_{12}(H_1\times H_2)$ in $\End(V_1\otimes V_2)$ is a local $\bar\F_\ell$-algebra.
To commute with $\rho_{12}(H_1\times H_2)$ is the same as to commute with $\rho_{12}(H_1\times\{1\})$ and $\rho_{12}(\{1\}\times H_2)$.  If $Z_i$ denotes the centralizer of $\rho_i(H_i)$ in $\End(V_i)$, and $Z'_i$ is any $\one$-linear complement of $Z_i$ in $\End(V_i)$, then the centralizer of $\rho_{12}(H_1\times \{1\})$ is 
$$Z_1\otimes \End(V_2) = Z_1\otimes (Z_2\oplus  Z'_2) = Z_1\otimes Z_2 \oplus Z_1\otimes Z'_2,$$
the centralizer of $\rho_{12}(\{1\}\times H_2)$ is 
$$\End(V_1)\times Z_2 = (Z_1\oplus  Z'_1)\otimes Z_2 = Z_1\otimes Z_2 \oplus Z'_1\otimes Z_2,$$
and the intersection of these two centralizers is $Z_1\otimes Z_2$.  

Each finite-dimensional representation is indecomposable if and only if its endomorphism ring is local \cite[Proposition~5.4]{Kr}.
The tensor product of finite-dimensional local algebras over an algebraically closed field is again local \cite[Theorem 4]{La}, and this proves the proposition.
\end{proof}

We will eventually be interested in the case $G=\SL_2(\F_\ell)^2$, but we start with $H=\SL_2(\F_\ell)$.
We denote by $V_i$  the $i$th symmetric power of the natural $2$-dimensional $\bar\F_\ell$-representation of $H$
and by $\indec$ the representation $V_1\otimes V_{\ell-1}$.

\begin{prop}
\label{facts}
We define
$$F_n \K(H) = 
\begin{cases}
\Span_{\Z}([V_0],\ldots,[V_n])&\text{if $n\le \ell-1$,}\\
\Span_{\Z}([V_0],\ldots,[V_{\ell-1}],[\indec ])&\text{if $n=\ell$,}\\
\K(H)&\text{if $n>\ell$.}
\end{cases}$$
We have the following facts:
\begin{enumerate}
\item The representation $\indec $ is indecomposable.
\item The product on $\K(H)$ is compatible with the filtration $F_i$ in the sense that
$$(F_i \K(H)) \,(F_j \K(H)) \subseteq F_{i+j}\K(H).$$
%
\end{enumerate}

\end{prop}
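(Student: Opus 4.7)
I would prove (1) and (2) in turn, drawing on the modular representation theory of $\SL_2(\F_\ell)$.

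For (1), the key input is that $V_{\ell-1}$ is the Steinberg representation, which is simple, projective, and injective. Since tensoring with a projective module preserves projectivity, $\indec = V_1 \otimes V_{\ell-1}$ is projective, hence a direct sum of principal indecomposable modules. By Steinberg's tensor product theorem (noting that the Frobenius acts trivially on $\SL_2(\F_\ell)$), the symmetric power $V_\ell$ has composition factors $V_1$ and $V_{\ell-2}$ as an $\SL_2(\F_\ell)$-module; combined with the formal character identity $\chi(V_1)\chi(V_{\ell-1}) = \chi(V_\ell)+\chi(V_{\ell-2})$, this shows that $\indec$ has composition factors $V_1$ with multiplicity one and $V_{\ell-2}$ with multiplicity two. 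A highest-weight computation produces a surjection $\indec \twoheadrightarrow V_{\ell-2}$, and the dimension count $\dim \indec = 2\ell$ identifies $\indec$ with the principal indecomposable module $P(V_{\ell-2})$, which is in particular indecomposable.

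For (2), since $F_n = \K(H)$ whenever $n > \ell$, the only nontrivial content is to show $[V_a]\cdot[V_b] \in F_{a+b}$ for $1\le a,b \le \ell - 1$ with $a+b \le \ell$; products involving $[V_0]$ are trivial, and any product involving $[\indec]$ lands in $F_n$ for some $n \ge \ell+1$ and hence in $\K(H)$. I would argue by induction on $\min(a,b)$. The base case $a=1$ reads $V_1\otimes V_b = V_{b+1}\oplus V_{b-1}$ for $b\le\ell-2$ (both summands are simple tilting modules, forced by matching characters), while $V_1\otimes V_{\ell-1} = \indec \in F_\ell$ by definition. For $a\ge 2$, I would apply the identity
\[V_1\otimes V_{a-1}\otimes V_b \;=\; V_a\otimes V_b\,\oplus\, V_{a-2}\otimes V_b,\]
evaluate the left-hand side by first applying the inductive hypothesis to $V_{a-1}\otimes V_b$ (whose index-sum $a+b-1 \le \ell-1$ places it in the generic range) and then tensoring each resulting simple summand with $V_1$ via the base case, subtract off the inductively known $V_{a-2}\otimes V_b$, and invoke Krull--Remak--Schmidt to isolate $V_a\otimes V_b$. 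The result is the usual Clebsch--Gordan decomposition when $a+b \le \ell-1$, while for $a+b=\ell$ one obtains
\[V_a\otimes V_b \;\cong\; \indec\oplus V_{\ell-4}\oplus V_{\ell-6}\oplus\cdots\oplus V_{|a-b|};\]
in either case the class lies in $F_{a+b}$.

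The main obstacle is the identification in (1) of $\indec$ with the specific principal indecomposable module $P(V_{\ell-2})$; this requires genuine input from the modular representation theory of $\SL_2(\F_\ell)$. Alternatively, one can argue via tilting-module theory for $\SL_2$, identifying $\indec$ with the indecomposable tilting module $T(\ell)$ of highest weight $\ell$, whose indecomposability is classical and whose role in the decomposition above follows from the fact that tensor products of tilting modules are tilting.
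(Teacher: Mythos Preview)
Your proposal is correct and, for part~(2), follows the paper's argument essentially word for word: both induct from $V_1\otimes V_b \cong V_{b-1}\oplus V_{b+1}$ (for $1\le b\le\ell-2$) to obtain Clebsch--Gordan for $i+j\le\ell-1$, and treat $i+j=\ell$ by expanding $(V_{i-2}\oplus V_i)\otimes V_j = V_1\otimes V_{i-1}\otimes V_j$ and invoking Krull--Schmidt, arriving at the identical formula $V_i\otimes V_j \cong \indec \oplus V_{\ell-4}\oplus\cdots\oplus V_{|i-j|}$. For part~(1) the paper simply cites \cite{AJL} to conclude that $\indec$ is the injective hull of a simple $\SL_2(\F_\ell)$-module; your route---Steinberg is projective, hence so is $\indec$, and a composition-factor count pins it down as a single PIM---reaches the same conclusion more explicitly. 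One small refinement: your ``dimension count $\dim\indec=2\ell$'' tacitly uses $\dim P(V_{\ell-2})=2\ell$, which is exactly what you are trying to establish; it is cleaner to argue directly from multiplicities, noting that $V_1$ occurs only once in $\indec$ while $[P(V_1):V_1]\ge 2$ (head and socle, since the group algebra is symmetric and $V_1$ is not projective), so $P(V_1)$ cannot be a summand, and then $[\indec:V_1]=1$ forces $\indec=P(V_{\ell-2})^1$.
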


\begin{proof}

The representation $V_1$ is the restriction of the tautological $2$-dimensional representation $\tilde V_1$
of $\SL_2(\bar\F_\ell)$.
Applying \cite[Lemma~3.1,~Proposition~3.3~(iii)]{AJL} with $\lambda = \ell-2$, 
we know that $\tilde V_1\otimes \Sym^{\ell-1}\tilde V_1$
is indecomposable, and by 
\cite[Lemma~4.1~(a)]{AJL}, the restriction $\indec $ of this representation to $\SL_2(\F_\ell)$ is the injective hull of an irreducible
representation of $\SL_2(\F_\ell)$ and therefore indecomposable.
(This fact can also be read off from Table 1 of the same paper.).  This gives claim (1).

By \cite[Lemma 2.5]{AJL}, for $1\le i \le \ell-2$, we have
\begin{equation}
\label{onestep}
V_i\otimes V_1\cong V_{i-1}\oplus V_{i+1}.
\end{equation}

By induction on $j$, this implies that for $i,j\ge 0$ and $i+j \le \ell-1$, we have the Clebsch-Gordan formula
$$V_i\otimes V_j \cong  V_{i+j}\oplus V_{i+j-2}\oplus\cdots\oplus V_{|i-j|}.$$
For $i+j=p$ and $0<i<j$, we claim that 
\begin{equation}
\label{CG}
V_i\otimes V_j= \indec  \oplus \bigoplus_{k=2}^i V_{\ell-2k}.
\end{equation}
The statement is trivial for $i=1$, and for $i\ge 2$,
\begin{align*}
V_{i-2}\otimes V_j \oplus V_i\otimes V_j &\cong (V_{i-2}\oplus V_i)\otimes V_j\\
&\cong V_1\otimes (V_{i-1}\otimes V_{j})\\
&\cong V_1\otimes (V_{\ell-1}\oplus V_{\ell-3}\oplus \cdots\oplus V_{\ell+1-2i}) \\
&\cong \indec  \oplus (V_{\ell-2}\oplus V_{\ell-4})\oplus\cdots\oplus (V_{\ell+2-2i}\oplus V_{\ell-2i}). \\
\end{align*}
As
$$V_{i-2}\otimes V_j = V_{\ell-2}\oplus V_{\ell-4} \oplus \cdots \oplus V_{\ell+2-2i},$$
Krull-Schmidt implies our claim, which in turn implies (2).
\end{proof}

Let
$$\Lambda_{V_1}(t) := 1 - [V_1] t + t^2 \in \K(H)[t].$$
The analogy between the (mod $\ell$) representation theory of $H$ and the (complex) representation theory of
$\SL_2(\C)$ might suggest the possibility that $\zeta_{V_1}(t) = \Lambda_{V_1}(t)^{-1}$, but this turns out not to be true.
Instead (\ref{onestep}) implies
\begin{equation}
\label{toy}
\Lambda_{V_1}(t)\zeta_{V_1}(t) \equiv  1 + ([V_{\ell-2}] + [V_\ell] - [\indec])t^\ell\pmod{t^{\ell+1}}.
\end{equation}
Note that since $\indec$ is indecomposable, the $t^\ell$ coefficient of $\Lambda(t)\zeta_{V_1}(t)$ is non-zero.
This phenomenon, as it arises in the case of the representation $V_1\boxtimes V_1$ of $\SL_2(\F_\ell)\times \SL_2(\F_\ell)$
is the key to our proof of irrationality.

Henceforth $G = \SL_2(\F_\ell)\times \SL_2(\F_\ell)$.
For non-negative integers $n$, we define 
$$F_n \K(G) := T(F_n \K(H)\otimes F_n \K(H)).$$
In particular, for $0\le n \le \ell-1$, 
$$F_n\K(G) = \Span_{\Z}\{[V_i\boxtimes V_j]\mid 0\le i,j \le n\}.$$

\begin{prop}
For $0\le n\le \ell-1$, we have
$$\Sym^n (V_1\boxtimes V_1) \cong \sum_{i=0}^{\lfloor n/2\rfloor} V_{n-2i}\boxtimes V_{n-2i}.$$
\end{prop}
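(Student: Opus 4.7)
The plan is to derive the decomposition from the characteristic-zero plethysm identity
\[
\Sym^n(V\otimes W)\cong\bigoplus_{\lambda\vdash n} S^\lambda V\otimes S^\lambda W,
\]
applied to $V=W=V_1$, after justifying that the hypothesis $n\le\ell-1$ makes every step of the classical Schur--Weyl derivation valid in characteristic $\ell$.

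First I would view $(V_1\boxtimes V_1)^{\otimes n}$ as a $G\times\Sigma_n$-module, with $\Sigma_n$ acting diagonally, and apply Schur--Weyl duality to each $V_1^{\otimes n}$ factor separately to obtain
\[
(V_1\boxtimes V_1)^{\otimes n}\cong\bigoplus_{\lambda,\mu\vdash n}(S^\lambda V_1\boxtimes S^\mu V_1)\otimes(M^\lambda\otimes M^\mu),
\]
where $M^\lambda$ denotes the Specht module. Since $n!$ is invertible in $\bar\F_\ell$, Maschke's theorem makes $\bar\F_\ell[\Sigma_n]$ semisimple, so $\Sym^n$ (the $\Sigma_n$-coinvariants of the tensor power) coincides with the $\Sigma_n$-invariants. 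Schur's lemma on the Specht factors, combined with the self-duality of Specht modules in the semisimple range, then collapses the double sum to the diagonal $\lambda=\mu$, producing
\[
\Sym^n(V_1\boxtimes V_1)\cong\bigoplus_{\lambda\vdash n}S^\lambda V_1\boxtimes S^\lambda V_1.
\]

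Next I would compute the Schur functors for the two-dimensional $\SL_2$-representation $V_1$: $S^\lambda V_1=0$ unless $\lambda$ has at most two parts, and for $\lambda=(a,b)$ with $a\ge b\ge 0$ and $a+b=n$ one has $S^{(a,b)}V_1\cong\Sym^{a-b}V_1\otimes(\Lambda^2 V_1)^{\otimes b}\cong V_{a-b}$, using that $\Lambda^2 V_1$ is the trivial representation (since $V_1$ comes from $\SL_2$) and that $\Sym^k V_1\cong V_k$ remains irreducible for $k\le\ell-1$. Setting $i=b$ so that $a-b=n-2i$ and letting $i$ run from $0$ to $\lfloor n/2\rfloor$ yields the stated formula.

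The main obstacle is confirming that the classical Schur--Weyl machinery survives reduction mod $\ell$. Every ingredient that could fail in positive characteristic --- semisimplicity of $\bar\F_\ell[\Sigma_n]$, self-duality and absolute irreducibility of Specht modules, and irreducibility of the symmetric powers $V_k=\Sym^k V_1$ --- remains valid because we are in the range $n\le\ell-1$, which guarantees $\gcd(n!,\ell)=1$ and confines all the summands $V_{n-2i}$ to the irreducible range $k\le\ell-1$.
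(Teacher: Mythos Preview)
Your proof is correct, and it takes the natural representation-theoretic route via the Cauchy identity, but the paper argues quite differently. Rather than invoking Schur--Weyl duality and Schur functors, the paper first notes that $\Sym^n(V_1\boxtimes V_1)$, being a summand of $V_1^{\otimes n}\boxtimes V_1^{\otimes n}$, must decompose as $\bigoplus_{i,j\le n}(V_i\boxtimes V_j)^{a_{i,j}}$ (using only the relation $V_i\otimes V_1\cong V_{i-1}\oplus V_{i+1}$ established just before). It then restricts to the first factor $H\times\{1\}$: since $(V_1\boxtimes V_1)|_H\cong V_1^{\oplus 2}$, the restriction is $\Sym^n(V_1\oplus V_1)=\bigoplus_{a+b=n}V_a\otimes V_b$, which Clebsch--Gordan evaluates as $\bigoplus_k V_{n-2k}^{\,n-2k+1}$. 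Comparing multiplicities of each $V_i$ on the two sides yields $a_{i,j}(j+1)\le i+1$, and by symmetry $a_{i,j}(i+1)\le j+1$; together these force $a_{i,j}\in\{0,1\}$ with equality only when $i=j$, and a final multiplicity match identifies exactly which diagonal terms occur. Your approach is more conceptual and immediately explains \emph{why} only diagonal terms appear (it is the Specht-module orthogonality $(M^\lambda\otimes M^\mu)^{\Sigma_n}=\delta_{\lambda\mu}\,\one$), and it generalizes without change to longer external tensor products; the paper's argument, by contrast, is entirely self-contained, needing nothing beyond the Clebsch--Gordan identities it has already set up and an elementary restriction-and-count trick.
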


\begin{proof}
First of all, the symmetric power is a quotient of 
$$(V_1\boxtimes V_1)^{\otimes n} = V_1^{\otimes n}\boxtimes V_1^{\otimes n},$$
which by (\ref{onestep}) and induction on $n$ is a direct sum of expressions of the form $V_i\boxtimes V_j$ with $i,j\le n$.  Thus
$\Sym^n (V_1\boxtimes V_1)$ is itself a direct sum of such expressions.  Writing
$$\Sym^n (V_1\boxtimes V_1) = \bigoplus_{0\le i,j\le n} (V_i\boxtimes V_j)^{a_{i,j}},$$
it remains to prove that $a_{i,j}$ is $0$ except when $i=j\in \{n,n-2,n-4,\ldots\}$, in which case it is $1$.

Restricting to $H\times \{1\}$, we obtain the isomorphism of $H$-modules
\begin{equation}
\label{restriction}
\bigoplus_{0\le i,j\le n} V_i^{a_{i,j}(j+1)} \cong \Sym^n(V_1\oplus V_1) \cong \bigoplus_{a+b=n} V_a\otimes V_b \cong \bigoplus_{k=0}^{\lfloor n/2\rfloor} V_{n-2k}^{n-2k+1},
\end{equation}
the last isomorphism following from (\ref{CG}).
Thus, $a_{i,j}(j+1)\le i+1$ for all $i,j\le n$.  By symmetry, also $a_{i,j}(i+1)\le j+1$.  Thus, $a_{i,j}\le 1$ with equality only if $i=j$.   Comparing with (\ref{restriction}), we see that $a_{i,i}=1$ exactly for $i\in \{n,n-2,n-4,\ldots\}$.
\end{proof}

\begin{prop}
\label{recur}
Define 
\begin{equation}
\label{Lambda}
\Lambda_{V_1\boxtimes V_1}(t) := 1-[V_1\boxtimes V_1]\,t + \bigl([V_2\boxtimes V_0]+[V_0\boxtimes V_2]\bigr)\,t^2  - [V_1\boxtimes V_1]\,t^3 + t^4.
\end{equation}
Then
$$\Lambda_{V_1\boxtimes V_1}(t)\zeta_{V_1\boxtimes V_1}(t)\equiv 1 \pmod{t^\ell}.$$
\end{prop}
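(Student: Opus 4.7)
The plan is to reduce the statement to a finite polynomial identity that can be verified by direct expansion using Clebsch--Gordan. Set
$$W(t):=\sum_{m=0}^{\ell-1}[V_m\boxtimes V_m]\,t^m\in\K(G)[t],\qquad a:=[V_1\boxtimes V_1],\ b:=[V_2\boxtimes V_0],\ c:=[V_0\boxtimes V_2].$$
The previous proposition gives $[\Sym^n(V_1\boxtimes V_1)]=\sum_{k=0}^{\lfloor n/2\rfloor}[V_{n-2k}\boxtimes V_{n-2k}]$ for $0\le n\le \ell-1$, and summing a geometric series in $t^2$ yields
$$\zeta_{V_1\boxtimes V_1}(t)\equiv\frac{W(t)}{1-t^2}\pmod{t^\ell}$$
in $\K(G)[[t]]$. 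Since $1-t^2$ is invertible in $\K(G)[[t]]$, it suffices to prove the polynomial congruence $\Lambda_{V_1\boxtimes V_1}(t)\,W(t)\equiv 1-t^2\pmod{t^\ell}$.

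The coefficient of $t^n$ on the left-hand side, for $0\le n\le \ell-1$ and with classes of negative index understood to be zero, equals
$$[V_n\boxtimes V_n]-a\,[V_{n-1}\boxtimes V_{n-1}]+(b+c)\,[V_{n-2}\boxtimes V_{n-2}]-a\,[V_{n-3}\boxtimes V_{n-3}]+[V_{n-4}\boxtimes V_{n-4}],$$
which should equal $1$ at $n=0$, $-1$ at $n=2$, and $0$ otherwise. To expand these products I would use the classical Clebsch--Gordan decomposition $V_i\otimes V_j=V_{i+j}\oplus V_{i+j-2}\oplus\cdots\oplus V_{|i-j|}$, which by (\ref{onestep}) applied inductively holds in $\K(H)$ for $i+j\le \ell-1$, i.e., strictly below the modular anomaly of (\ref{CG}). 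Since the products entering the coefficient of $t^n$ never force a Clebsch--Gordan computation at total degree exceeding $n\le\ell-1$, we remain in the ``safe'' regime throughout.

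Concretely, for $m\ge 2$ one obtains the expansions
\begin{align*}
a\,[V_m\boxtimes V_m]&=[V_{m+1}\boxtimes V_{m+1}]+[V_{m+1}\boxtimes V_{m-1}]+[V_{m-1}\boxtimes V_{m+1}]+[V_{m-1}\boxtimes V_{m-1}],\\
(b+c)\,[V_m\boxtimes V_m]&=2[V_m\boxtimes V_m]+[V_{m+2}\boxtimes V_m]+[V_{m-2}\boxtimes V_m]+[V_m\boxtimes V_{m+2}]+[V_m\boxtimes V_{m-2}],
\end{align*}
and substituting these into the coefficient formula for $4\le n\le \ell-1$ produces a pairwise cancellation of all terms on the nose. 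The four boundary cases $n=0,1,2,3$ are checked by hand using the obvious truncated versions of these expansions (terms of negative index drop out), giving the required values $1,0,-1,0$. The main obstacle is simply the bookkeeping in this cancellation; conceptually, the only point to watch is that throughout the range $n\le\ell-1$ we stay safely below the wall where modular Clebsch--Gordan first diverges from its characteristic-zero form.
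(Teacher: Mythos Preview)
Your argument is correct but follows a genuinely different route from the paper's. The paper observes that $\Lambda_{V_1\boxtimes V_1}(t)=\sum_{i=0}^{4}(-1)^i[\wedge^i(V_1\boxtimes V_1)]\,t^i$ and then proves the general congruence
\[
\Bigl(\sum_{i=0}^{\dim V}(-1)^i[\wedge^i V]\,t^i\Bigr)\zeta_V(t)\equiv 1\pmod{t^\ell}
\]
for an arbitrary representation $V$, by noting that the identity $\sum_{i+j=k}(-1)^i\epsilon_{i,j}=0$ among Young projectors in $\Z[1/k!][\Sigma_k]$ (which is a characteristic-zero $\lambda$-ring fact) reduces mod $\ell$ as soon as $k<\ell$. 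No Clebsch--Gordan computation is needed, and the previous proposition on $\Sym^n(V_1\boxtimes V_1)$ is not invoked.

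Your approach instead feeds that previous proposition back in: you rewrite $\zeta_{V_1\boxtimes V_1}(t)\equiv W(t)/(1-t^2)$ modulo $t^\ell$ and then verify $\Lambda_{V_1\boxtimes V_1}(t)W(t)\equiv 1-t^2$ coefficient by coefficient via Clebsch--Gordan. This is perfectly valid; the telescoping you describe really does occur, and the only delicate point---that all tensor products stay in the range $i+j\le \ell-1$ where ordinary Clebsch--Gordan applies---you have correctly identified. What the paper's argument buys is generality (any $V$, not just $V_1\boxtimes V_1$) and a conceptual explanation of why the coefficients of $\Lambda$ are what they are; what your argument buys is that it never leaves the explicit basis $\{[V_i\boxtimes V_j]\}$ and requires no identification of the exterior powers of $V_1\boxtimes V_1$.
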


\begin{proof}
This is the special case $V := V_1\boxtimes V_1$ of the general congruence formula
$$\biggl( \sum_{i=0}^{\dim V} (-1)^i [\wedge^iV]t^i\biggr)\zeta_V(t)
 \equiv 1\pmod{t^\ell}.$$
Equivalently, we claim that for $1\le k < l$, we have
\begin{equation}
\label{partial-lambda}
\sum_{i+j=k} (-1)^i [\wedge^i V\otimes \Sym^j V] = 0.
\end{equation}

For every object $W$ of a $\lambda$-ring, we have the identity
$$\sum_{i+j = k}   \lambda^i(W) \lambda^j(-W) = 0.$$
If $W$ is a finite-dimensional complex vector space regarded as an object of the representation ring
of $\GL(W)$, it is easy to see by the splitting principle that $(-1)^j\lambda^j (-W)$ corresponds to $\Sym^j W$.
If $\epsilon_{i,j}$ in the group ring $\Z[1/k!][S_k]$ denotes the projector which maps $W^{\otimes k} = W^{\otimes i+j}$ onto 
$\wedge^i W\otimes \Sym^j W$, this implies
$$\sum_{i+j=k} (-1)^i\epsilon_{i,j} = (-1)^{i+j} \sum_{i+j=k} (-1)^j\epsilon_{i,j} = 0.$$
As $k!$ is invertible in $\ell$, this reduces to the same identity over $\F_l$, which implies
the identity (\ref{partial-lambda}) for group representations in characteristic $l$.
\end{proof}

We now come to the key lemma.
Let $R$ be a ring containing $\K(G)$.
Let $A(t),B(t)\in R[t]$ denote polynomials with $A(0)=B(0)=1$, and let $\one^k$ denote the trivial representation of 
$G$ of dimension $k$.  

\begin{lem}
\label{key}
If $A(t)$ and $B(t)$ are as above, $a_i\in R$ for all $i\ge 0$,
\begin{equation}
\label{fracAB}
A(t) = B(t)(a_0+a_1t + a_2 t^2 + \cdots),
\end{equation}
and
\begin{equation}
\label{symform}
a_i = [\Sym^i((V_1\boxtimes V_1)\oplus \one^k)] \in \K(G)
\end{equation}
for $i\le \deg A+\deg B+k+4 < \ell$, then $a_i\in \K(G)$ for all $i\ge 0$, and
$$a_\ell - [\indec\boxtimes \indec] + [\indec\boxtimes V_{\ell-2}] + [V_{\ell-2}\boxtimes \indec]\in F_{\ell-1}\K(G).$$
\end{lem}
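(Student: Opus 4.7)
The plan is to show that the power series $\sum a_i t^i$, a priori only in $R[[t]]$, is actually the reciprocal of an explicit polynomial in $\K(G)[t]$, and then to compute its $t^\ell$ coefficient modulo $F_{\ell-1}\K(G)$ by direct expansion. Write $V := V_1\boxtimes V_1$ and set $P(t) := (1-t)^k\,\Lambda_V(t) \in \K(G)[t]$, a polynomial of degree $k+4$ with $P(0)=1$. Multiplying Proposition \ref{recur} by $(1-t)^{-k}$ shows
$$P(t)\sum_i s_i t^i \equiv 1 \pmod{t^\ell}, \qquad s_i := [\Sym^i(V\oplus\one^k)].$$
Consider $Q(t) := P(t)A(t) - B(t) \in R[t]$, whose degree is at most $\deg A + \deg B + k + 4$. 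From $B(t)\sum a_i t^i = A(t)$ and the hypothesis $a_i = s_i$ for $i \le \deg A + \deg B + k + 4$, the identity $Q(t) = B(t)\bigl[P(t)\sum a_i t^i - 1\bigr]$ shows $Q$ vanishes modulo $t^{\deg A + \deg B + k + 5}$, which exceeds its degree; hence $Q=0$. Thus $P(t)A(t)=B(t)$ and $\sum a_i t^i = 1/P(t) \in \K(G)[[t]]$, proving the first assertion. Comparison modulo $t^\ell$ then yields $a_i = s_i$ for $0\le i \le \ell-1$.

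For the second assertion, let $c_\ell \in \K(G)$ be the $t^\ell$-coefficient of $\Lambda_V(t)\zeta_V(t)-1$, so that $\zeta_V(t) - 1/\Lambda_V(t) = c_\ell t^\ell + O(t^{\ell+1})$. Multiplying by $(1-t)^{-k}$ and comparing $t^\ell$-coefficients in $\sum s_i t^i - \sum a_i t^i = (1-t)^{-k}[\zeta_V(t) - 1/\Lambda_V(t)]$ gives $a_\ell = s_\ell - c_\ell$. By the explicit formula $\Sym^i V = \bigoplus_j V_{i-2j}\boxtimes V_{i-2j}$ for $i\le\ell-1$ from the earlier proposition, $[\Sym^i V]\in F_i \subseteq F_{\ell-1}$ for $i<\ell$, so the binomial expansion collapses to $s_\ell \equiv [\Sym^\ell V] \pmod{F_{\ell-1}}$.

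The central computation is to evaluate $c_\ell$ modulo $F_{\ell-1}$. Expanding
$$c_\ell = [\Sym^\ell V] - [V][\Sym^{\ell-1}V] + ([V_2\boxtimes V_0]+[V_0\boxtimes V_2])[\Sym^{\ell-2}V] - [V][\Sym^{\ell-3}V] + [\Sym^{\ell-4}V],$$
each product (using $\Sym^{\ell-m}V = \bigoplus_j V_{\ell-m-2j}\boxtimes V_{\ell-m-2j}$ for $1\le m\le 4$) becomes a sum of classes $(V_p\otimes V_q)\boxtimes(V_r\otimes V_s)$; by Proposition \ref{facts} and (\ref{CG}) such a class lies in $F_{\ell-1}$ unless a factor $V_p\otimes V_q$ with $p+q=\ell$ arises, which contributes an $\indec$. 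Only the $j=0$ summands allow this: in $[V][\Sym^{\ell-1}V]$ the pair $V_1\otimes V_{\ell-1}=\indec$ on both sides yields $[\indec\boxtimes\indec]$; in $([V_2\boxtimes V_0]+[V_0\boxtimes V_2])[\Sym^{\ell-2}V]$ the identity $V_2\otimes V_{\ell-2}=\indec\oplus V_{\ell-4}$ yields $[\indec\boxtimes V_{\ell-2}]+[V_{\ell-2}\boxtimes\indec]$; the term $[V][\Sym^{\ell-3}V]$ involves only Clebsch--Gordan products with $p+q\le\ell-2$, and $[\Sym^{\ell-4}V]\in F_{\ell-4}$, so both lie in $F_{\ell-1}$. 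Therefore
$$c_\ell \equiv [\Sym^\ell V] - [\indec\boxtimes\indec] + [\indec\boxtimes V_{\ell-2}] + [V_{\ell-2}\boxtimes\indec] \pmod{F_{\ell-1}},$$
and $a_\ell = s_\ell - c_\ell$ gives the required congruence; the a priori uncontrolled term $[\Sym^\ell V]$ cancels between the two expressions. The main obstacle is this combinatorial bookkeeping: systematically verifying that no other product in the expansion of $c_\ell$ escapes $F_{\ell-1}$.
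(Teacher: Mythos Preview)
Your proof is correct and follows essentially the same strategy as the paper: show by degree-counting that $\sum a_i t^i$ is the inverse of an explicit polynomial in $\K(G)[t]$, then compute $a_\ell$ modulo $F_{\ell-1}$ using the known values $a_i=[\Sym^i V]$ for $i<\ell$ and the Clebsch--Gordan identities. The only organizational difference is that the paper first absorbs $(1-t)^k$ into $B(t)$ to reduce to $k=0$ and then reads $a_\ell$ directly from the recurrence $\Lambda_V(t)\sum a_i t^i=1$, never introducing $[\Sym^\ell V]$; your route via $a_\ell=s_\ell-c_\ell$ produces the same answer after the (correct) cancellation of $[\Sym^\ell V]$.
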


\begin{proof}
For any $G$-representation $V$,
$$\Sym^n(V\oplus \one) \cong \bigoplus_{i=0}^n \Sym^i V.$$
Thus, 
$$(1-t)\sum_{i=0}^n [\Sym^i(V\oplus \one)]\,t^i\equiv \sum_{i=0}^n [\Sym^i V]\,t^i\pmod{t^{n-1}}.$$
Iterating,
\begin{align*}
(1-t)^k\sum_{i=0}^\infty a_it^i &\equiv (1-t)^k\sum_{i=0}^n [\Sym^i((V_1\boxtimes V_1)\oplus \one^k)]\,t^i\\
&\equiv \sum_{i=0}^n [\Sym^i(V_1\boxtimes V_1)]\,t^i\pmod{t^{n+1}}.
\end{align*}
Thus, replacing $B(t)$ with $B(t)(1-t)^k$, we may assume $k=0$,
which means $a_i = [\Sym^i(V_1\boxtimes V_1)]$ for $0\le i\le \deg A + \deg B+4$.

Defining $\Lambda_{V_1\boxtimes V_1}(t)$ as in (\ref{Lambda}) and multiplying (\ref{fracAB}) by $B(t)\Lambda_{V_1\boxtimes V_1}(t)$, we get
\begin{align*}
A(t)\Lambda_{V_1\boxtimes V_1}(t)\equiv B(t)\Lambda_{V_1\boxtimes V_1}(t)\sum_{i=0}^\infty a_i t^i &\equiv B(t)\Lambda_{V_1\boxtimes V_1}(t) \sum_{i=0}^\infty [\Sym^i(V_1\boxtimes V_1)]\, t^i \\
&\equiv B(t) \pmod{t^{\deg A + \deg B +5}}.
\end{align*}
As $\deg(A(t)\Lambda_{V_1\boxtimes V_1}(t)-B(t)) < \deg A +\deg B+5$, we have $A(t)\Lambda_{V_1\boxtimes V_1}(t) = B(t)$, so
\begin{equation}
\label{inv}
A(t) (1 - \Lambda_{V_1\boxtimes V_1}(t)\sum_{i=0}^\infty a_i t^i) = 0.
\end{equation}
As $A(t)$ is invertible in $R[[t]]$, this implies 
$$\Lambda_{V_1\boxtimes V_1}(t)\sum_{i=0}^\infty a_i t^i = 1.$$
This gives a linear recurrence for the $a_i$ with coefficients in $\K(G)$, and it follows that $a_i\in \K(G)$ for all $i\ge 0$.

By Proposition~\ref{recur},
$$\zeta_{V_1\boxtimes V_1}(t) \equiv \sum_{i=0}^\infty a_i t^i\pmod{t^\ell}.$$
This implies (\ref{symform}) for  all $n\le \ell-1$.

Finally, matching $t^\ell$ coefficients in (\ref{inv}), we get
$$a_\ell = a_1 a_{\ell-1} - \bigl([V_2\boxtimes V_0]+[V_0\boxtimes V_2]\bigr) a_{\ell-2} + a_1 a_{\ell-3} - a_{\ell-4}.$$
Modulo classes in $F_{\ell-1}\K(G)$, the right hand side reads
$$[V_1\boxtimes V_1]\,[V_{\ell-1}\boxtimes V_{\ell-1}] - \bigl([V_2\boxtimes V_0]+[V_0\boxtimes V_2]\bigr)[V_{\ell-2}\boxtimes V_{\ell-2}],$$
which,  by (\ref{CG}), further reduces modulo $\Span_{\Z}\{[V_i\boxtimes V_j]\mid 0\le i,j\le \ell-1\}$ to 
$$[\indec\boxtimes \indec] - [\indec\boxtimes V_{\ell-2}] - [V_{\ell-2}\boxtimes \indec].$$

\end{proof}

\begin{lem}
\label{keyb}
Suppose the hypotheses of Lemma~\ref{key} are satisfied.   If elements $b_i\in R$ satisfy
\begin{equation}
\label{prod}
\sum_{i=0}^\infty b_i t^i = \prod_{r=1}^\infty \sum_{j=0}^\infty a_j t^{jr}
\end{equation}
then $b_i\in \K(G)$ for all $i\ge 0$, and
$$b_\ell\equiv [\indec\boxtimes \indec] - [\indec\boxtimes V_{\ell-2}] - [V_{\ell-2}\boxtimes \indec]\pmod{F_{\ell-1}\K(G)}.$$
\end{lem}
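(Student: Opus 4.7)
The plan is to expand the right-hand side of (\ref{prod}) combinatorially and identify $b_\ell \pmod{F_{\ell-1}\K(G)}$. Writing $m_r(\lambda)$ for the multiplicity of part $r$ in a partition $\lambda$, multiplying out gives
\[
b_n \;=\; \sum_{\lambda \vdash n} \prod_{r \ge 1} a_{m_r(\lambda)}
\]
for every $n \ge 0$. Since Lemma~\ref{key} furnishes $a_j \in \K(G)$ for every $j$ and $\K(G)$ is a ring, each $b_n$ lies in $\K(G)$, which settles the first assertion.

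For the congruence, observe that the partition $\lambda = (1^\ell)$ contributes exactly $a_\ell$ to $b_\ell$, so it suffices to show that $\prod_r a_{m_r(\lambda)} \in F_{\ell-1}\K(G)$ for every other $\lambda \vdash \ell$. Any such $\lambda$ has at least one part $\ge 2$, which forces the total number of parts to satisfy $\sum_r m_r(\lambda) \le \ell - 1$, and in particular $m_r(\lambda) \le \ell - 1$ for every $r$. The proof of Lemma~\ref{key} (via Proposition~\ref{recur}) in fact establishes $a_i = [\Sym^i((V_1 \boxtimes V_1) \oplus \one^k)]$ throughout $0 \le i \le \ell - 1$, not merely for the small-$i$ range appearing in the hypothesis. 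Combining this with $\Sym^i(V\oplus W) = \bigoplus_{j+j'=i} \Sym^j V \otimes \Sym^{j'} W$ and the formula $\Sym^j(V_1 \boxtimes V_1) \cong \bigoplus_{c=0}^{\lfloor j/2 \rfloor} V_{j-2c} \boxtimes V_{j-2c}$ valid for $j \le \ell - 1$, each such $a_i$ is a non-negative $\Z$-linear combination of classes $[V_p \boxtimes V_p]$ with $p \le i$.

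Expanding $\prod_r a_{m_r(\lambda)}$ distributively, every resulting monomial takes the form
\[
[V_{p_{1}} \boxtimes V_{p_{1}}] \cdots [V_{p_{u}} \boxtimes V_{p_{u}}] \;=\; \bigl[(V_{p_{1}} \otimes \cdots \otimes V_{p_{u}}) \boxtimes (V_{p_{1}} \otimes \cdots \otimes V_{p_{u}})\bigr],
\]
where $u$ is the number of distinct part sizes in $\lambda$ and each $p_i \le m_{r_i}(\lambda)$, so $p_1 + \cdots + p_u \le \sum_r m_r(\lambda) \le \ell - 1$. Iterating Proposition~\ref{facts}(2) places $[V_{p_1} \otimes \cdots \otimes V_{p_u}]$ in $F_{\ell-1}\K(H)$, whence applying the external tensor product injection $T$ puts the whole monomial in $F_{\ell-1}\K(G) = T(F_{\ell-1}\K(H) \otimes F_{\ell-1}\K(H))$. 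Summing over monomials and over $\lambda \ne (1^\ell)$ yields $b_\ell \equiv a_\ell \pmod{F_{\ell-1}\K(G)}$, and the congruence for $a_\ell$ supplied by Lemma~\ref{key} finishes the proof. The only bookkeeping step requiring care is the extension of the formula for $a_i$ to the full range $i \le \ell - 1$, which follows directly from the linear recurrence for $\sum a_i t^i$ derived inside the proof of Lemma~\ref{key}.
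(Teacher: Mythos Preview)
Your proof is correct and follows essentially the same route as the paper's. Both arguments expand $b_\ell$ as a sum over partitions of $\ell$, isolate the contribution $a_\ell$ from $\lambda=(1^\ell)$, and observe that every other term is a product $\prod_r a_{m_r(\lambda)}$ with $\sum_r m_r(\lambda)\le \ell-1$; the multiplicativity of the filtration (Proposition~\ref{facts}(2), transported to $\K(G)$ via $T$) then places these in $F_{\ell-1}\K(G)$. Your write-up is simply more explicit than the paper's, in particular spelling out the diagonal form $a_i \in \Span_\Z\{[V_p\boxtimes V_p]:p\le i\}$ and the extension of (\ref{symform}) to all $i\le \ell-1$, both of which the paper leaves implicit.
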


\begin{proof}
Since $a_i\in \K(G)$ for all $i$ and $a_i\in F_i\K(G)$ for $0\le i\le \ell-1$, (\ref{prod}) implies by induction on $i$ that $b_i\in \K(G)$ for all $i\ge 0$ and $b_i\in F_i\K(G)$ for $0\le i\le \ell-1$.
As $b_\ell - a_\ell$ is a linear combination of products $a_{i_1}\cdots a_{i_k}$ with $i_1+\cdots+i_n \le \ell-1$,
the claim for $b_\ell$ follows.

\end{proof}

We note for future reference that the relationship (\ref{prod}) between the $a_i$ and the $b_i$  is significant because 
it is the relationship between the cohomology of the $i$th symmetric power of $X$ and the $i$th Hilbert scheme of $X$.

\section{A family of motivic measures}
In this section, we construct the motivic measures needed for the proof of our main theorem.

Let $k$ be a subfield of $\C$.  Let $\bar k$ denote the algebraic closure of $k$ in $\C$,  and set $G_k := \Gal(\bar k/k)$.
We define $\bar X := X\times_{\Spec k} \Spec \bar k$ for any variety $X/k$.  We regard the \'etale cohomology groups
$H^i(\bar X,\one)$ and $H_c^i(\bar X,\one)$ as $G_k$-representations.  They are obtained by extension of scalars from
the $G_k$-representations  $H^i(\bar X,\F_\ell)$ and $H_c^i(\bar X,\F_\ell)$ respectively.

Our construction depends on the Bittner construction \cite{Bi}.  In order to carry it out, we need the following theorem:

\begin{thm}
\label{blowup}
Let $X$ be a non-singular projective variety over $k$ and $Y\subset  X$ a
nonsingular closed subvariety of codimension $r$, $X'$ the blow up of $X$ along $Y$, and $Y'$
the inverse image of $Y$ in $X'$. Then for any $q$ there is a natural
direct sum decomposition of $G_k$-modules
$$H^q(\bar X',\one)=H^q(\bar X,\one)\oplus  \bigoplus _{j=1}^{r-1}H^{q-2j}(\bar Y,\one (-j))$$
\end{thm}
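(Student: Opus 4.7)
The plan is to establish the decomposition at the level of complexes of \'etale $\one$-sheaves on $X$ by computing $Rf_* \one_{X'}$, and then take hypercohomology over $\bar X$. Since every ingredient will be defined functorially in the geometric data, the resulting identification will automatically respect the $G_k$-action.

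First I would apply the projective bundle formula to $\pi\colon Y' = \bbP(N_{Y/X}) \to Y$, which is a Zariski-locally trivial $\bbP^{r-1}$-bundle. This gives
$$R\pi_* \one_{Y'} \;\cong\; \bigoplus_{j=0}^{r-1} \one_Y(-j)[-2j],$$
with the $j$-th summand carved out by cup product with the $j$-th power of the tautological class $\xi \in H^2(\bar Y', \one(1))$. In particular the unit map $\one_Y \to R\pi_* \one_{Y'}$ is split injective onto the $j=0$ factor. Next, using proper base change together with the fact that $f$ is an isomorphism off $Y$, the blow-up square is bicartesian for \'etale cohomology with torsion coefficients: the natural map $\Cone(\one_X \to Rf_* \one_{X'}) \to i_* \Cone(\one_Y \to R\pi_* \one_{Y'})$ is an isomorphism. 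Combined with the previous step this produces a distinguished triangle
$$\one_X \;\longrightarrow\; Rf_* \one_{X'} \;\longrightarrow\; i_* \bigoplus_{j=1}^{r-1} \one_Y(-j)[-2j] \;\longrightarrow\; \one_X[1].$$

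The heart of the argument will be showing this triangle splits. Its extension class lies in $\bigoplus_{j=1}^{r-1} \Ext^{2j+1}(i_* \one_Y(-j), \one_X)$, which by adjunction $i_* \dashv i^!$ and absolute purity $i^! \one_X \cong \one_Y(-r)[-2r]$ (valid here since we are in characteristic zero) becomes $\bigoplus_{j=1}^{r-1} H^{2j+1-2r}(\bar Y, \one(j-r))$. Since $1 \le j \le r-1$ forces $2j+1-2r \le -1$, all of these groups vanish, so a splitting exists, and a natural one is supplied by the geometric formulas $f^*$ on the first factor and $\alpha \mapsto i'_*(\pi^*\alpha \cup \xi^{j-1})$ on each $H^{q-2j}(\bar Y, \one(-j))$ summand. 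Both pieces commute with the $G_k$-action by functoriality, so taking $H^q(\bar X, -)$ yields the stated Galois-equivariant decomposition.

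The main obstacle is the input to the splitting step -- one needs absolute purity for smooth closed embeddings with mod-$\ell$ coefficients and the bicartesianness of the blow-up square. Both are classical in characteristic zero but require some care to set up cleanly; everything else is routine bookkeeping with Verdier duality and the projective bundle formula.
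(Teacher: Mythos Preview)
Your approach is correct but takes a genuinely different route from the paper's. You work in the derived category of \'etale sheaves on $X$: you form the blow-up distinguished triangle, then split it by showing the obstruction class vanishes via absolute purity $i^!\one_X \simeq \one_Y(-r)[-2r]$ and a negative-degree computation. The paper instead argues directly at the level of cohomology groups: it compares the two excision long exact sequences for $\bar U = \bar X \setminus \bar Y \cong \bar X' \setminus \bar Y'$, and constructs the left inverse to $f^*$ explicitly as the Poincar\'e-duality pushforward $f_*$, verifying $f_*f^* = \id$ by a trace computation at a closed point lying off the center. Your route is slicker and more conceptual, and does not actually require $X$ to be projective; the paper's route is more elementary in that it uses only Poincar\'e duality for smooth projective varieties rather than absolute purity for a closed embedding. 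One minor point: to obtain a splitting defined over $k$ (hence automatically $G_k$-equivariant), you should compute those Ext groups over $Y$ rather than over $\bar Y$---but since the degrees $2j+1-2r$ are negative the groups vanish either way, and in any case your explicit splitting maps $f^*$ and $\alpha \mapsto i'_*(\pi^*\alpha \cup \xi^{j-1})$ are manifestly defined over $k$.
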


The analogue Theorem~\ref{blowup}  is proved for $\Z _\ell$-coefficients (instead of $\one$) in
\cite[XVIII 2.2.2]{SGA7}. We essentially reproduce the  argument (filling in some details) to prove it in our setting.
We make use of the 
following proposition \cite[VI, 10.1]{Mi}:

\begin{prop} \label{milne} Let $\bar Y$ be a smooth projective variety over $\bar{k}$ and let
$\cE$ be a vector bundle of rank $r$ over $\bar Y$. Let $\bbP (\cE)\to \bar Y$ be the corresponding
projective bundle. Then for each $q$ there is a natural isomorphism of $G_k$-modules
$$H^q(\bbP (\cE),\one)=\bigoplus _{j\geq 0}H^{q-2j}(\bar Y,\one(-j)),$$
where the summand for $j=0$ is the image of the map $p^*\colon H^q(\bar Y,\one)\to H^q(\bbP(\cE),\one)$.
\end{prop}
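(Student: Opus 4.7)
The plan is to reduce to the standard projective bundle formula with $\F_\ell$-coefficients and then extend scalars to $\bar\F_\ell$. Since $\bar\F_\ell$ is a flat (in fact free) $\F_\ell$-module and $\bar Y$ and $\bbP(\cE)$ are quasi-compact and quasi-separated, cohomology commutes with this extension of scalars, so it suffices to prove that for each $q$ there is a natural $G_k$-equivariant isomorphism
\[
\bigoplus_{j=0}^{r-1} H^{q-2j}(\bar Y,\F_\ell(-j)) \isomoto H^q(\bbP(\cE),\F_\ell),
\]
with the $j=0$ summand coming from $p^*$. Tensoring with $\bar\F_\ell$ then gives the statement of the proposition.

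For the mod-$\ell$ statement, I would construct the map explicitly using the first Chern class. Let $\xi = c_1(\cO_{\bbP(\cE)}(1)) \in H^2(\bbP(\cE),\F_\ell(1))$ be the tautological hyperplane class (defined via the Kummer sequence applied to $\cO_{\bbP(\cE)}(1)$). The map in question is
\[
(x_0,\ldots,x_{r-1}) \longmapsto \sum_{j=0}^{r-1} p^*(x_j)\cup \xi^j.
\]
To prove this is an isomorphism, analyze the Leray spectral sequence
\[
E_2^{p,q} = H^p(\bar Y, R^q p_* \F_\ell) \Longrightarrow H^{p+q}(\bbP(\cE),\F_\ell).
\]
By smooth and proper base change, the sheaves $R^q p_* \F_\ell$ are locally constant with fibers $H^q(\bbP^{r-1},\F_\ell)$. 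Since each $\bar Y$-point of $\bbP(\cE)$ admits a Zariski-local trivialization and the stabilizer of the hyperplane class acts trivially on cohomology, one sees $R^{2j} p_* \F_\ell \cong \F_\ell(-j)$ for $0 \le j \le r-1$ and $R^{2j+1}p_*\F_\ell = 0$. The spectral sequence degenerates at $E_2$ because the powers $\xi^j \in H^{2j}(\bbP(\cE),\F_\ell(j))$ restrict fiberwise to a basis of $H^*(\bbP^{r-1},\F_\ell)$, hence furnish global lifts of the canonical sections of $R^{2j}p_*\F_\ell(-j)$; the resulting filtration splits by multiplication with the $\xi^j$.

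For $G_k$-equivariance, observe that $\xi$ is Galois-invariant because it is the Chern class of a line bundle coming from $Y$ (equivalently, the construction of $\cO(1)$ is canonical and commutes with the Galois action on $\bbP(\cE)$), so cupping with $\xi^j$ raises the Tate weight by $j$ but is $G_k$-equivariant. The map $p^*\colon H^{q-2j}(\bar Y,\F_\ell(-j)) \to H^q(\bbP(\cE),\F_\ell(-j))$ is automatically $G_k$-equivariant, and cupping with $\xi^j \in H^{2j}(\bbP(\cE),\F_\ell(j))$ then lands in $H^q(\bbP(\cE),\F_\ell)$ with the correct twist. The main obstacle I anticipate is the careful identification $R^{2j}p_*\F_\ell \cong \F_\ell(-j)$, including the correct Tate twist, which must be done canonically in order to track the Galois action; this is handled by identifying $R^2 p_* \F_\ell$ with the dual of the relative Tate module via the trace pairing on the fiber $\bbP^{r-1}$ and noting $\xi$ generates it. Once this identification is in place, the rest is formal bookkeeping.
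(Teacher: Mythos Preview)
Your sketch is correct and is the standard argument; the paper does not actually prove this proposition but simply cites it from Milne's \emph{\'Etale Cohomology} \cite[VI, Theorem~10.1]{Mi}, where the proof proceeds exactly as you outline (Leray spectral sequence plus degeneration via cup product with powers of $c_1(\cO(1))$). One small clarification: the $G_k$-equivariance relies on $\bar Y$ and $\cE$ being base-changed from objects over $k$ (so that $\cO_{\bbP(\cE)}(1)$ and hence $\xi$ descend to $k$), which is implicit in the paper's usage but not stated in the proposition; you should make this hypothesis explicit.
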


\begin{proof}[Proof of Theorem~\ref{blowup}] Consider the commutative diagram
\begin{equation*}
\minCDarrowwidth20pt\begin{CD}
\bar Y' @>>> \bar X'\\
@VgVV @VVfV\\
\bar Y @>>>\bar  X
\end{CD}
\end{equation*}Note that the map $g: \bar Y'\to \bar Y$ is the projective bundle corresponding to the normal vector bundle $N_{\bar Y/\bar X}$ 
of rank $r$ on $\bar Y$.  Writing $\bar U := \bar X\setminus \bar Y \cong \bar X'\setminus \bar Y' =: \bar U'$,
we have the induced morphism of long exact sequences of cohomology with compact supports
\begin{equation}
\label{diagr}
\minCDarrowwidth8pt\begin{CD}
H^i_c(\bar U',\one) @>>> \  H^i(\bar X',\one) @>>>  H^i(\bar Y',\one) @>>> H^{i+1}_c(\bar U',\one) @>>> H^{i+1}(\bar X',\one) \\
@| @AAf^*A @AAg^*A @| @AAf^*A \\
H^i_c(\bar U,\one) @>>> \  H^i({\bar X},\one) @>>>  H^i({\bar Y},\one) @>>> H^{i+1}_c(\bar U,\one) @>>> H^{i+1}({\bar X},\one) \\
\end{CD}
\end{equation}
(where we use the fact that $H^i_c(-)$ coincides with $H^i(-)$ for the projective varieties $\bar X,\bar X',\bar Y, \bar Y'$).

We know that the map $g^*$ is injective by Proposition~\ref{milne}. We claim that the arrows $f^*$ are also injective. 

\begin{lemma} \label{lemma} For each $i$ the map $f^*:H^i (\bar X,\one)\to H^i(\bar X',\one)$ 
is injective. Moreover it has a natural left inverse.
\end{lemma}

\begin{proof} We may assume that $\bar X$ and hence also $\bar X'$ is connected, 
so that $H^0(\bar X,\one)=H^0(\bar X',\one)=\one$. We denote by $1_{\bar X}\in H^0(\bar X,\one)$ and $1_{\bar X'}\in H^0(\bar X',\one)$ the corresponding generators. Clearly $f^*(1_{\bar X})=1_{\bar X'}$. 

Let $n=\dim \bar X=\dim \bar X'$. 
Recall that Poincar\'e duality  \cite[VI, Theorem 11.1]{Mi} gives a canonical nondegenerate Galois-equivariant pairing 
$$H^r(\bar X,\one)\times H^{2n-r}(\bar X,\one(n))\stackrel{\cup}{\longrightarrow} H^{2n}(\bar X,\one (n))\stackrel{\eta_{\bar X}}{\longrightarrow} \one$$
where the map on the left is cup-product and
$\eta_{\bar X}:H^{2n}(\bar X,\one(n))\to \one$ is the trace map isomorphism. It has the property that for every closed point $P$ we have
$\eta_{\bar X}(\cycl_X(P))=1\in \one$, where $\cycl_{\bar X}(P)\in H^{2n}(\bar X,\one(n))$ is the image under the Gysin map $H^0(P,\one)\to H^{2n}(\bar X,\one(n))$ of $1_P$ \cite[VI, p.269]{Mi}. The same applies to $\bar X'$.  

Choose a closed point $P\in \bar X\setminus \bar Y$. Then $f^{-1}(P)=Q$ is a single point in $\bar X'$. Hence it follows from \cite[VI, Proposition 9.2]{Mi} that the map $f^*:H^{2n}(\bar X,\one(n))\to H^{2n}(\bar X',\one(n))$ takes $\cycl_{\bar X}(P)$ to $\cycl_{\bar X'}(Q)$. 
It follows that the diagram 
\begin{equation*}\begin{CD}
H^{2n}(\bar X',\one(n)) @>\eta_{\bar X'}>>  \one\\
@Af^*AA @| \\
H^{2n}(\bar X,\one(n)) @>\eta_{\bar X}>>  \one
\end{CD}
\end{equation*}
commutes. 

Poincar\'e dualities for $H^\bullet (\bar X)$ and for $H^\bullet (\bar X')$ induce the pushforward map 
$$f_*:H^i(\bar X',\one)\to H^i(\bar X,\one)$$
such that $\f_*(x' \cup f^*(x))=f_*(x')\cup x$ for $x\in H^\bullet (\bar X)$ and 
$x'\in H^\bullet (\bar X')$ \cite[VI, Remark 11.6]{Mi}. We have $f_*(1_{\bar X'})=d\cdot 1_{\bar X}$ for some $d\in \one$. 
We claim that in fact $d=1$. Indeed, in the above notation we have
$$\begin{array}{rcl}
1 & = & \eta _{\bar X'}(\cycl_{\bar X'}(Q))\\
  & = & \eta _{\bar X'}(1_{\bar X'}\cup \cycl_{\bar X'}(Q))\\
  & = & \eta _{\bar X'}(1_{\bar X'}\cup f^*(\cycl_{\bar X}(P)))\\
  & = & \eta _{\bar X}(f_*(1_{\bar X'})\cup \cycl_{\bar X}(P))\\
  & = & d\cdot \eta_{\bar X}(1_{\bar X}\cup \cycl_{\bar X}(P))\\
  & = & d\cdot \eta_{\bar X}(\cycl_{\bar X}(P))\\
  & = & d
\end{array}
$$

It follows that $f_*f^*(x)=x$ for every $x\in H^\bullet(X,\one)$. Indeed,
$$f_*f^*(x)=f_*(1_{X'}\cup f^*(x))=f_*(1_{X'})\cup x=1_X\cup x=x$$
This proves the lemma.
\end{proof}   

It follows from the injectivity of $f^*$ that the diagram (\ref{diagr}) induces an isomorphism 
$$\coker f^* \isom\coker g^*.$$
The fact that $f^*$ has a canonical left-inverse allows us to identify $H^i(\bar X',\one)$ with $H^i(\bar X,\one)\oplus \coker f^*$
as $G_k$-modules.
Thus 
$$H^q(\bar X',\one)\isom H^q(\bar X,\one)\oplus \coker g^*\isom
H^q(\bar X,\one)\oplus\bigoplus_{j=1}^{r-1} H^{q-2j}(\bar Y,\one(-j))$$
by Proposition~\ref{milne}.
\end{proof}

\begin{thm}
For each prime $\ell$ and every field $k$ of characteristic $0$,  there exists a unique motivic measure $\mtot\colon K_0(\Var_k)\to \K(G_k)$ 
satisfying
$$\mtot([X]) = \sum_{i=0}^{2\dim X} [H^i(\bar X,\one)].$$
for all projective non-singular varieties $X$.
\end{thm}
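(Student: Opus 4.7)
The plan is to apply Bittner's presentation of $K_0(\Var_k)$ in characteristic zero: this ring is generated by classes $[X]$ of smooth projective $k$-varieties modulo $[\emptyset] = 0$ and the blow-up relations $[X'] - [Y'] = [X] - [Y]$, where $X'$ is the blow-up of a smooth projective variety $X$ along a smooth closed subvariety $Y \subset X$ of codimension $r \ge 1$ and $Y' \subset X'$ is the exceptional divisor. Granting such a presentation, it suffices to define the candidate measure on smooth projective varieties by the prescribed formula and then verify (i) that it respects the blow-up relation, (ii) that it is multiplicative on products, and (iii) that it sends $[\emptyset]$ to $0$. Uniqueness then follows immediately, since any ring homomorphism out of $K_0(\Var_k)$ is determined by its values on a generating set.

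The key step will be the blow-up relation. Here I would invoke Theorem~\ref{blowup} directly: summing its decomposition over $q$ yields
$$\mtot([X']) - \mtot([X]) = \sum_q \sum_{j=1}^{r-1} [H^{q-2j}(\bar Y, \one(-j))].$$
Meanwhile, applying Proposition~\ref{milne} to the projectivization $Y' = \bbP(N_{Y/X})$ of the rank-$r$ normal bundle, and summing over $q$, gives
$$\mtot([Y']) = \sum_q \sum_{j=0}^{r-1} [H^{q-2j}(\bar Y, \one(-j))] = \mtot([Y]) + \sum_q \sum_{j=1}^{r-1} [H^{q-2j}(\bar Y, \one(-j))],$$
so subtracting produces the required identity $\mtot([X']) - \mtot([Y']) = \mtot([X]) - \mtot([Y])$. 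The condition $\mtot([\emptyset]) = 0$ is immediate.

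For multiplicativity I would invoke the K\"unneth formula for \'etale cohomology with $\one$-coefficients: for smooth projective $X, Y$ over $k$ there is a canonical $G_k$-equivariant isomorphism
$$H^n(\overline{X\times Y}, \one) \isom \bigoplus_{p+q=n} H^p(\bar X, \one) \otimes H^q(\bar Y, \one).$$
Because tensor products of $\bar\F_\ell[G_k]$-modules distribute over direct sums and preserve splittings, this identity passes to $\K(G_k)$ and delivers $\mtot([X]\cdot[Y]) = \mtot([X]) \cdot \mtot([Y])$. Multiplicativity then extends from generators to all of $K_0(\Var_k)$ by the universal property of Bittner's presentation.

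The main obstacle --- or really the only place where there is genuine content --- lies in handling the blow-up relation, and the subtle point there is that $\K(G_k)$ is built from \emph{split} short exact sequences. We therefore need the decompositions supplied by Theorem~\ref{blowup} and Proposition~\ref{milne} to be direct sums of $G_k$-modules, not merely equalities of semisimplifications or matches of graded pieces of a filtration. Both results provide canonical direct sum decompositions at the level of $G_k$-modules, so once these are in hand the verification proceeds without further difficulty.
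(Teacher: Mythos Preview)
Your proposal is correct and follows essentially the same approach as the paper: invoke Bittner's presentation, verify multiplicativity via the K\"unneth formula, and verify the blow-up relation by combining Theorem~\ref{blowup} with Proposition~\ref{milne}. Your added remark that the direct-sum (rather than merely filtered) nature of these decompositions is what makes the argument work in $\K(G_k)$ is exactly the point and is implicit in the paper's reliance on those two results.
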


\begin{proof}
By Bittner's theorem \cite{Bi}, it suffices to prove  that $\mtot([X\times Y]) = \mtot([X])\mtot([Y])$ whenever $X$ and $Y$ are non-singular projective varieties and that whenever  $X$ is a non-singular projective variety, $Y$ a non-singular closed subvariety, $X'$ the blow up of $X$ along $Y$ and $Y'$ the inverse image of $Y$ in $X'$ then
$$\mtot([X'])-\mtot([X]) = \mtot([Y'])-\mtot([Y]).$$
The first property follows immediately from the K\"unneth formula \cite[VI, 8.13]{Mi}.
The second follows from Theorem~\ref{blowup} and Proposition~\ref{milne}.

\end{proof}

\begin{defn}
We define the motivic measure $\ntot\colon K_0(\Var_k)\to \K(G_{k(\zeta_\ell)})$ to be the composition of $\mtot$ with the restriction map
$\K(G_{k})\to \K(G_{k(\zeta_\ell)})$.  
\end{defn}
In the application to the main theorem, we will always take $k=\Q$.
\section{Galois representations}

\begin{prop}
\label{E1E2}
There exist elliptic curves $E_1$ and $E_2$ over $\Q$ such that for all sufficiently large primes $\ell$,
there exist linearly disjoint Galois extensions $K_1$ and $K_2$ of $\Q(\zeta_\ell)$ such that the (mod $\ell$)
Galois representations of $G_{\Q(\zeta_\ell)}$ acting on $H^1(\bar E_i,\F_\ell)$ have kernels $G_{K_i}$
and images isomorphic to $\SL_2(\F_\ell)$.
\end{prop}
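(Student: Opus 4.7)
The plan is to take $E_1, E_2$ to be two non-CM elliptic curves over $\Q$ which are not isogenous over $\bar\Q$ (e.g.\ any two curves of distinct conductor with incompatible sequences of $a_p$'s). The main ingredients are Serre's open image theorem for a single non-CM elliptic curve, together with its extension to pairs of non-isogenous curves via Goursat's lemma and Faltings.

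For a single curve, Serre's theorem gives, for each $i$ and all sufficiently large $\ell$, surjectivity of the mod-$\ell$ representation
$$\rho_{i,\ell}\colon G_\Q \to \GL(H^1(\bar E_i,\F_\ell))\cong \GL_2(\F_\ell).$$
Since the Weil pairing identifies $\det\rho_{i,\ell}$ with the mod-$\ell$ cyclotomic character $\chi_\ell$, and $G_{\Q(\zeta_\ell)} = \ker\chi_\ell$, the restriction of $\rho_{i,\ell}$ to $G_{\Q(\zeta_\ell)}$ has image $\GL_2(\F_\ell)\cap\SL_2(\F_\ell) = \SL_2(\F_\ell)$. The fixed field $K_i$ of the kernel of this restriction is then a Galois extension of $\Q(\zeta_\ell)$ with group $\SL_2(\F_\ell)$, as required.

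The linear disjointness $K_1\cap K_2 = \Q(\zeta_\ell)$ amounts to showing that the image $H_\ell$ of
$$(\rho_{1,\ell}\times \rho_{2,\ell})|_{G_{\Q(\zeta_\ell)}}\colon G_{\Q(\zeta_\ell)} \to \SL_2(\F_\ell)\times\SL_2(\F_\ell)$$
is the full product. Since $H_\ell$ surjects onto each factor, Goursat's lemma identifies $H_\ell$ with the graph of an isomorphism of quotients $\SL_2(\F_\ell)/N_1 \isom \SL_2(\F_\ell)/N_2$. For $\ell\geq 5$ the normal subgroups of $\SL_2(\F_\ell)$ are $1, \{\pm I\}, \SL_2(\F_\ell)$, so a proper $H_\ell$ would produce an isomorphism of $\SL_2(\F_\ell)$ or of $\PSL_2(\F_\ell)$. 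Since every automorphism of $\SL_2(\F_\ell)$ is conjugation by an element of $\GL_2(\F_\ell)$, this translates into an isomorphism $\rho_{2,\ell}\cong \rho_{1,\ell}\otimes \chi$ of $G_{\Q(\zeta_\ell)}$-modules for some character $\chi$ of order dividing $2$.

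The main obstacle is excluding these twist isomorphisms uniformly in $\ell$, and this is exactly where the non-isogeny hypothesis enters. By Faltings, the $\ell$-adic systems $T_\ell E_1\otimes\Q_\ell$ and $T_\ell E_2\otimes\Q_\ell$ are non-isomorphic and remain so after any quadratic twist, since $E_1, E_2$ are not isogenous over $\bar\Q$. Chebotarev then supplies a fixed rational prime $p$ of good reduction for both curves with $a_p(E_1)^2\neq a_p(E_2)^2$; by the Hasse bound $a_p(E_i)^2\leq 4p$, so this difference is a nonzero integer of absolute value at most $4p$ and is therefore nonzero modulo $\ell$ for every $\ell > 4p$. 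The mod-$\ell$ Frobenius traces at $p$ then rule out any Goursat twisting, forcing $H_\ell = \SL_2(\F_\ell)\times\SL_2(\F_\ell)$ and yielding the required linear disjointness.
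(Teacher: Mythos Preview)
Your overall strategy matches the paper's: Serre's open image theorem for each curve, then Goursat's lemma for the product, reducing to the exclusion of a twist $\rho_{2,\ell}\cong\rho_{1,\ell}\otimes\chi$ with $\chi^2=1$. The difference lies entirely in how the twist exclusion is carried out, and your version has a real gap at that step.

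The paper does not work with an arbitrary non-isogenous pair. It fixes a prime $r\ge 5$ and chooses $E_1$ ordinary at $r$ and $E_2$ supersingular at $r$ (together with multiplicative reduction at another prime $q$ to force non-CM). Then $a_r(E_2)=0$ while $a_r(E_1)$ is a fixed nonzero integer, so the single prime $r$ witnesses $a_r(E_1)\neq\pm a_r(E_2)$ in characteristic zero, hence modulo $\ell$ for all $\ell>|a_r(E_1)|$. The paper also inserts a short centralizer argument to promote the projective identification $\bar\rho_1=\bar\rho_2$ from $G_{\Q(\zeta_\ell)}$ to all of $G_\Q$, so that $\chi$ is a character of $G_\Q$ and one may compare Frobenius traces at \emph{any} good prime, not only at primes split in $\Q(\zeta_\ell)$.

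In your argument the sentence ``Chebotarev then supplies a fixed rational prime $p$ \dots\ with $a_p(E_1)^2\neq a_p(E_2)^2$'' is not justified by what precedes it. Faltings gives, for each quadratic $\chi$, that $V_\ell E_1\not\cong V_\ell E_2\otimes\chi$, and Chebotarev then yields, for each such $\chi$, a prime $p_\chi$ with $a_{p_\chi}(E_1)\neq\chi(p_\chi)a_{p_\chi}(E_2)$. But there are infinitely many quadratic characters, and you need one $p$ with $a_p(E_1)\neq\pm a_p(E_2)$ simultaneously. This is true and can be salvaged (for instance by comparing the irreducible $\ell$-adic representations $\Sym^2 V_\ell E_i$ and arguing that their isomorphism forces $\rho_1\cong\rho_2\otimes\chi$ for a single quadratic $\chi$), but it is a genuine extra step, not a direct consequence of Faltings plus Chebotarev. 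Note also that your $\chi$ is, as written, only a character of $G_{\Q(\zeta_\ell)}$, so you cannot yet evaluate at $\mathrm{Frob}_p$ for your fixed $p$ unless $p\equiv 1\pmod\ell$; you need the centralizer step the paper uses to descend $\chi$ to $G_\Q$ first. The authors themselves remark, immediately after their proof, that the statement for an arbitrary non-CM non-$\bar\Q$-isogenous pair would follow from the Frey--Mazur conjecture, indicating they did not regard that generality as immediate.
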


\begin{proof}
Fix primes $q,r\ge 5$.
Let $E_1$ and $E_2$ be any elliptic curves over $\Q$ with multiplicative reduction at $q$
and such that $E_1$ and $E_2$ have respectively good ordinary reduction and good supersingular reduction at $r$.
(For instance, if $q=11$ and $r=5$, the curves given in Cremona notation by $E_1 := \mathtt{33a1}$ and $E_2 := \mathtt{11a1}$ satisfy these conditions.)
Let $\rho^\ell_i$ denote the homomorphism from the absolute Galois group $G_{\Q}$
to $\GL(H^1(\bar E_i,\F_\ell))\cong \GL_2(\F_\ell)$.

Neither $E_1$ nor $E_2$ can have complex multiplication, since every CM curve has integral $j$-invariant \cite[II~Theorem~6.1]{Si},
while an elliptic curve with multiplicative reduction at $q$ cannot have $q$-adically integral $j$-invariant \cite[Table~4.1]{Si}.
By Serre's theorem \cite{Se}, for $\ell$ sufficiently large, the $\rho^\ell_i$ is surjective, 
so the image of  $G_{\Q(\zeta_\ell)}$ in  $\GL(H^1(\bar E_i,\F_\ell))$ is $\SL_2(\F_\ell)$.
We assume this holds and that $\ell\ge 5$.
Let $\bar\rho^\ell_i\colon G_{\Q}\to \PGL_2(\F_\ell)$ denote the composition of $\rho^\ell_i$ with the quotient map
$\GL_2(\F_\ell)\to \PGL_2(\F_\ell)$.  

Suppose that $\rho^\ell_1|_{G_{\Q(\zeta_\ell)}} =  \rho^\ell_2|_{G_{\Q(\zeta_\ell)}}$.  
As the common image of the two representations has trivial centralizer in $\PGL_2(\F_\ell)$, it follows that
$\bar\rho^\ell_1=\bar\rho^\ell_2$.  Thus, $\rho^\ell_1 = \rho^\ell_2\otimes\chi$ for some character  $\chi$ of $\Gal(\Q(\zeta_\ell)/\Q)$.  Taking determinant of both sides, $\chi^2=1$.

The representations $\rho^\ell_i$ are both unramified at $r$, so $\Tr(\rho^\ell_i(\mathrm{Frob}_r))$ is well defined, and the two traces are related by a factor of $\chi(\mathrm{Frob}_r)=\pm 1$.  This is impossible since the trace of $\mathrm{Frob}_r$ is zero for $E_2$ but not for $E_1$.

Now $\rho^\ell_1$ and $\rho^\ell_2$ together give an injective homomorphism $\rho^\ell_{12}$
$$\Gal(K_1K_2/\Q(\zeta_\ell))\to \SL_2(\F_\ell)\times \SL_2(\F_\ell)$$
whose image projects onto $\SL_2(\F_\ell)$ on both factors.  As the only normal subgroups of $\SL_2(\F_\ell)$
are the group itself, $\{\pm 1\}$ and $\{1\}$, applying Goursat's lemma to the image of $\rho_{12}^l$, 
either this image is all of $\SL_2(\F_\ell)\times \SL_2(\F_\ell)$, in which case $\rho_{12}^\ell$ is an isomorphism, or 
$\bar\rho_1$ and $\bar\rho_2$ coincide on $\Gal(K_1 K_2/\Q(\zeta_\ell))$.  We have seen that the latter is impossible, so the proposition follows.
\end{proof}

We remark that assuming the Frey-Mazur conjecture is true, Proposition~\ref{E1E2} is true for any two non-CM elliptic curves which are not isogenous over $\bar\Q$.

If $\ell$ is a prime, $\Sigma$ is a group of order prime to $\ell$, and $V$ a finite dimensional $\Sigma$-representation over $\one$, then the map
$$v\mapsto |\Sigma|^{-1}\sum_{\sigma\in\Sigma} \sigma v$$
induces a natural isomorphism $V_\Sigma\to V^\Sigma$, from coinvariants to invariants.  In what follows, we do not distinguish between
these spaces; in particular, we identify the symmetric $n$th power with the symmetric tensors when $n<\ell$.
\begin{prop}
\label{quotient}
Let $X$ be a variety over a field $k\subseteq \C$, $\ell$ a prime, and $\Sigma$ a finite group whose order is prime to $\ell$.
For any $\Sigma$-action on $X$ defined over $k$ and for every $q\ge 0$, there is a canonical isomorphism of $G_k$-modules
$$H^q(\overline{X/\Sigma},\one)\isom H^q(\bar X,\one)^\Sigma.$$
\end{prop}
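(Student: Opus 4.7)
The plan is to reduce to the familiar fact that for a finite group action with order invertible in the coefficient ring, cohomology of the quotient equals the invariants of the cohomology, via the finite pushforward along the quotient map.

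First I would identify $\overline{X/\Sigma}$ with $\bar X/\Sigma$; this is the standard base-change compatibility for geometric quotients by finite groups acting on (quasi-projective) varieties, which applies since the paper is already using $\Sym^n X$. Write $\pi\colon \bar X\to \bar X/\Sigma$ for the quotient map, which is finite and $\Sigma$-equivariant (with trivial $\Sigma$-action on the target), so $\Sigma$ acts on $\pi_*\one$ by transport of structure. Because $\pi$ is finite, $R^q\pi_*\one=0$ for $q>0$, and the Leray spectral sequence degenerates to give a $\Sigma$- and $G_k$-equivariant isomorphism
$$H^q(\bar X,\one)\;\isom\; H^q(\bar X/\Sigma,\pi_*\one).$$

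Next I would compute $(\pi_*\one)^\Sigma$. At any geometric point $\bar y$ of $\bar X/\Sigma$, the stalk of $\pi_*\one$ is the permutation $\one$-module on the fiber $\pi^{-1}(\bar y)$, which is a single $\Sigma$-orbit, so its $\Sigma$-invariants are one-dimensional. The unit of the adjunction furnishes a canonical map $\one\to \pi_*\one$ landing in $(\pi_*\one)^\Sigma$, and the stalk calculation shows it is an isomorphism onto the invariants. Since $|\Sigma|$ is invertible in $\one$, Maschke's theorem makes the functor of $\Sigma$-invariants exact on sheaves of $\one[\Sigma]$-modules (the averaging projector $\frac{1}{|\Sigma|}\sum_{\sigma\in\Sigma}\sigma$ exhibits $(\cF)^\Sigma$ as a direct summand of $\cF$), so it commutes with the derived functor $H^q(\bar X/\Sigma,-)$. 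Combining,
$$H^q(\bar X/\Sigma,\one)\;\isom\; H^q\bigl(\bar X/\Sigma,(\pi_*\one)^\Sigma\bigr)\;\isom\; H^q(\bar X/\Sigma,\pi_*\one)^\Sigma\;\isom\; H^q(\bar X,\one)^\Sigma.$$

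All of the maps above are natural: the quotient and $\pi$ are defined over $k$, so $G_k$ acts compatibly on every term and all isomorphisms are $G_k$-equivariant. The hardest step will probably not be the cohomological algebra, which is essentially formal once $|\Sigma|$ is invertible, but rather the geometric input of identifying $\overline{X/\Sigma}$ with $\bar X/\Sigma$ and verifying that $\pi_*\one$ really is the permutation sheaf with $(\pi_*\one)^\Sigma\isom\one$ on a Galois-equivariant stalk level; once that is pinned down, the Leray + invariants argument gives the canonical isomorphism claimed.
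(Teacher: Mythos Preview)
Your argument is correct, but it takes a genuinely different route from the paper's. You work entirely within \'etale cohomology: after identifying $\overline{X/\Sigma}$ with $\bar X/\Sigma$, you use finiteness of the quotient map to collapse the Leray spectral sequence, identify $(\pi_*\one)^\Sigma$ with $\one$ stalkwise, and then invoke invertibility of $|\Sigma|$ to commute $\Sigma$-invariants with $H^q$. The paper instead observes only that the pullback $H^q(\overline{X/\Sigma},\one)\to H^q(\bar X,\one)^\Sigma$ is $G_k$-equivariant, and then checks it is an isomorphism by invoking the comparison theorem with singular cohomology (using $k\subseteq\C$) and citing Grothendieck's T\^ohoku result for CW complexes. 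Your approach buys generality---it does not use the hypothesis $k\subseteq\C$ and would go through over any base field with $\ell\neq\mathrm{char}(k)$---at the cost of spelling out more of the sheaf-theoretic machinery; the paper's approach is shorter precisely because it offloads the isomorphism check to the topological setting.
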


\begin{proof}
As the morphism $X\to X/\Sigma$ is defined over $k$, the natural map $H^q(\overline{X/\Sigma},\one)\to H^q(\bar X,\one)$
respects $G_k$-actions, and its image of course lies in the space of $\Sigma$-invariants.  It remains to see that this is an isomorphism.
As $k\subseteq \C$, we can use the comparison theorem with the ordinary cohomology of $X(\C)$.  The  statement for cohomology of CW complexes is due to Grothendieck \cite[5.2.3]{Tohoku}.
\end{proof}

\begin{cor}
\label{Symm}
Let $X$ be a variety over a field $k\subseteq \C$, $\ell$ a prime, and $n<\ell$ is a non-negative integer, then
$$H^\bullet(\overline{\Sym^n X},\one) \isom \Sym^n H^\bullet(\bar X,\one),$$
where $\Sym^n$ is taken in the sense of $\Z/2\Z$-graded $G_k$-representations (i.e., if $V = V^0\oplus V^1$, $\Sym^n V$ means 
$\bigoplus_{i+j=n} \Sym^i V^0\otimes \wedge^j V^1$.)
\end{cor}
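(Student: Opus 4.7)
The plan is to combine Proposition~\ref{quotient} with the K\"unneth formula and the standard super-symmetric decomposition of tensor powers.

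First I would apply Proposition~\ref{quotient} to $X^n$ equipped with the permutation action of $\Sigma = \Sigma_n$. The hypothesis $n<\ell$ guarantees that $|\Sigma_n|=n!$ is prime to $\ell$, so the proposition yields a canonical $G_k$-equivariant isomorphism
$$H^\bullet(\overline{\Sym^n X},\one)=H^\bullet(\overline{X^n/\Sigma_n},\one)\isom H^\bullet(\bar X^n,\one)^{\Sigma_n}.$$

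Next I would invoke the K\"unneth formula for \'etale cohomology with $\one$-coefficients (applicable since $\mathrm{char}\,k=0$ and $\one$ is a field, so there is no Tor obstruction; alternatively, one appeals again to the comparison with singular cohomology, as in the proof of Proposition~\ref{quotient}). This gives a $G_k\times\Sigma_n$-equivariant isomorphism
$$H^\bullet(\bar X^n,\one)\isom H^\bullet(\bar X,\one)^{\otimes n},$$
where $\Sigma_n$ acts on the right-hand side by the signed permutation action dictated by the Koszul sign rule on the $\Z/2\Z$-grading $V=V^0\oplus V^1$ with $V^i=\bigoplus_{q\equiv i\,(2)} H^q(\bar X,\one)$.

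Finally, I would identify $(V^{\otimes n})^{\Sigma_n}$ with $\Sym^n V$ in the $\Z/2\Z$-graded sense. Because $n!$ is invertible in $\one$, the averaging projector $\tfrac{1}{n!}\sum_{\sigma\in\Sigma_n}\mathrm{sign}(\sigma)\,\sigma$ (with the Koszul sign) splits the action, and a direct combinatorial computation — expanding $(V^0\oplus V^1)^{\otimes n}$ according to which tensor factors land in $V^0$ and which in $V^1$, and observing that the $\Sigma_n$-orbits are parametrized by pairs $(i,j)$ with $i+j=n$ — yields
$$(V^{\otimes n})^{\Sigma_n}\isom\bigoplus_{i+j=n}\Sym^i V^0\otimes\wedge^j V^1,$$
which is the stated super-symmetric power. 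Composing the three isomorphisms and verifying that each is $G_k$-equivariant gives the result.

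The main obstacle is really only bookkeeping: tracking the Koszul signs carefully in the K\"unneth identification so that the $\Sigma_n$-action on $V^{\otimes n}$ is the signed one (and not the naive one, which would produce $\Sym^n V$ in the ungraded sense). Everything else is formal, since the hypothesis $n<\ell$ both makes Proposition~\ref{quotient} applicable and makes $n!$ invertible in $\one$, so Maschke-style averaging works and yields the super-symmetric decomposition on the nose.
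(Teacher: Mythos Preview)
Your proposal is correct and follows exactly the paper's (very brief) approach: use the K\"unneth formula to identify $H^\bullet(\bar X^n,\one)$ with $H^\bullet(\bar X,\one)^{\otimes n}$ as a $G_k\times\Sigma_n$-module, then apply Proposition~\ref{quotient} with $\Sigma=\Sigma_n$ (the hypothesis $n<\ell$ ensuring $|\Sigma_n|$ is prime to $\ell$); your added discussion of the Koszul signs and the identification of invariants with the super-symmetric power just makes explicit what the paper leaves implicit. One small slip: the projector onto $\Sigma_n$-invariants is $\tfrac{1}{n!}\sum_\sigma\sigma$, with the Koszul signs already absorbed into the action of $\sigma$ on $V^{\otimes n}$, not $\tfrac{1}{n!}\sum_\sigma\mathrm{sign}(\sigma)\,\sigma$; but this does not affect your argument, since the combinatorial identification $(V^{\otimes n})^{\Sigma_n}\cong\bigoplus_{i+j=n}\Sym^i V^0\otimes\wedge^j V^1$ you state is the correct one.
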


\begin{proof}
By the K\"unneth formula, 
$$H^\bullet(\overline{X^n},\one) \isom H^\bullet(\bar X,\one)^{\otimes n}.$$
The corollary follows by applying Proposition~\ref{quotient} to $\Sigma := \Sigma_n$.
\end{proof}

Note that  if $H^\bullet(\bar X,\one)$ is zero in odd degrees, then the action of $\Sigma_n$ is the usual permutation action on tensor factors, and the
symmetric $n$th power can therefore be taken in the usual sense of $G_k$-representations. There is no distinction between the alternating
sum of cohomology and the total cohomology so we can 
work with Galois representations rather than virtual representations.

\begin{thm}
\label{K3}
Let $E_1$, $E_2$, $K_1$, $K_2$ be as in Proposition~\ref{E1E2}.  Let $X'$ denote the K3 surface obtained by blowing up the nodes of the Kummer surface 
$$X := (E_1\times E_2)/\langle \iota\rangle,$$  
where $\iota$ is multiplication by $-1$.
For $\ell$ sufficiently large, there is an isomorphism
\begin{equation}
\label{key-group}
\Gal(K_1 K_2/\Q(\zeta_\ell))\to \SL_2(\F_\ell)\times \SL_2(\F_\ell),
\end{equation}
and with respect to this isomorphism,
$$\ntot([X']) = \Res_{G_{\Q(\zeta_\ell)}}^{\SL_2(\F_\ell)^2}[\one^{20}\oplus V_1\boxtimes V_1].$$
\end{thm}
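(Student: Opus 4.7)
The plan is to compute $\ntot([X'])$ by expressing $[X']$ in $K_0(\Var_\Q)$ in terms of $[X]$ via the resolution of the 16 nodes, and then computing $\ntot([X])$ by applying Proposition~\ref{quotient} to the quotient presentation of the Kummer surface.

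First I would handle the group isomorphism (\ref{key-group}). By Proposition~\ref{E1E2}, for $\ell$ large the fields $K_1,K_2$ are linearly disjoint Galois extensions of $\Q(\zeta_\ell)$, each with Galois group $\SL_2(\F_\ell)$ acting on $H^1(\bar E_i,\one)$. Linear disjointness gives the canonical isomorphism
\begin{equation*}
\Gal(K_1K_2/\Q(\zeta_\ell))\isom \Gal(K_1/\Q(\zeta_\ell))\times \Gal(K_2/\Q(\zeta_\ell))\isom \SL_2(\F_\ell)\times \SL_2(\F_\ell).
\end{equation*}
Under this identification, $H^1(\bar E_1,\one)\cong V_1\boxtimes\mathbf{1}$ (the first factor acts via $\rho_1^\ell$, the second is trivial because $K_2\supseteq \Q(\zeta_\ell)$ fixes $H^1(\bar E_1,\one)$) and similarly $H^1(\bar E_2,\one)\cong \mathbf{1}\boxtimes V_1$.

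Next I would reduce the computation of $\ntot([X'])$ to that of $\ntot([X])$. The Kummer surface $X$ has $16$ nodes, the images of the $2$-torsion of $E_1\times E_2$, and $X'$ is obtained by replacing each node with an exceptional $\P^1$. Cutting and pasting gives
\begin{equation*}
[X']-[X] = 16\bigl([\P^1]-[\pt]\bigr) = 16\L \quad \text{in } K_0(\Var_\Q).
\end{equation*}
Since the cyclotomic character is trivial on $G_{\Q(\zeta_\ell)}$, we have $\ntot(\L)=[\mathbf{1}]$, so $\ntot([X'])=\ntot([X])+16[\mathbf{1}]$.

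Now I would compute $\ntot([X])$. Since $\langle\iota\rangle$ has order $2$ coprime to $\ell$ (as $\ell$ is odd), Proposition~\ref{quotient} gives $H^q(\bar X,\one)\cong H^q(\overline{E_1\times E_2},\one)^{\iota}$. I apply the K\"unneth formula, noting that $\iota=-1$ acts as $(-1)^i$ on $H^i(\bar E_j,\one)$, hence as $(-1)^{i+j}$ on $H^i(\bar E_1,\one)\otimes H^j(\bar E_2,\one)$. The even-degree terms are $\iota$-invariant and the odd-degree ones are killed. Using the identifications above and $H^2(\bar E_j,\one)\cong \one(-1)$, $H^4(\overline{E_1\times E_2},\one)\cong\one(-2)$, which restrict to trivial representations on $G_{\Q(\zeta_\ell)}$, I get
\begin{equation*}
\ntot([X]) = [\mathbf{1}]+ \bigl[\mathbf{1}\oplus (V_1\boxtimes V_1)\oplus\mathbf{1}\bigr]+[\mathbf{1}] = [\mathbf{1}^4\oplus V_1\boxtimes V_1].
\end{equation*}
Combining with the blow-up contribution yields $\ntot([X'])=[\mathbf{1}^{20}\oplus V_1\boxtimes V_1]$, as claimed.

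The only real subtlety here is making sure the identifications of $H^1(\bar E_i,\one)$ with the correct external factor of $V_1\boxtimes\mathbf{1}$ versus $\mathbf{1}\boxtimes V_1$ are consistent with the chosen isomorphism (\ref{key-group}); once that is fixed, all remaining work is standard K\"unneth plus tracking the action of $\iota$ and of the cyclotomic character. The blow-up formula (Theorem~\ref{blowup}) is not needed here because the singular $X$ is handled directly by cut-and-paste plus Proposition~\ref{quotient}, which is considerably cleaner than computing $\ntot([X'])$ by Theorem~\ref{blowup} from $\ntot$ of the smooth double cover.
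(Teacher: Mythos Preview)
There is a genuine gap in the central step. You compute the \'etale cohomology of the singular Kummer surface $X$ via Proposition~\ref{quotient} and then silently write
\[
\ntot([X])=\sum_q [H^q(\bar X,\one)].
\]
But $\mtot$ (and hence $\ntot$) is \emph{defined} through Bittner's presentation, and the formula $\mtot([Z])=\sum_q [H^q(\bar Z,\one)]$ is only guaranteed for \emph{smooth projective} $Z$. For a singular variety there is no reason this should hold in the split Grothendieck group $\K(G_{\Q(\zeta_\ell)})$: the assignment $Z\mapsto \sum_q(-1)^q[H^q_c(\bar Z,\one)]$ is not even known to be well-defined there, since excision gives long exact sequences, not direct-sum decompositions. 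Proposition~\ref{quotient} tells you what $H^q(\bar X,\one)$ is, but says nothing about $\ntot([X])$. This is precisely why the paper does not attempt to evaluate $\ntot$ on $X$: it instead uses the excision sequences for $U\subset X$ and $U\subset X'$ to compute $H^*(\bar X',\one)$ directly and then applies the defining formula to the smooth projective $X'$, followed by a splitting argument for the resulting extension. Your approach can be salvaged, but only by invoking Corollary~\ref{bounded-quot}, which is exactly the statement that $\ntot$ of a finite-group quotient agrees with invariant cohomology for $\ell$ large; that result, however, rests on the Chow-motive machinery of \S6 and is not a consequence of Proposition~\ref{quotient}.

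There is also a smaller slip in your cut-and-paste: the identity $[X']-[X]=16\L$ in $K_0(\Var_\Q)$ presumes that all sixteen nodes are $\Q$-rational, i.e.\ that $E_1[2]\times E_2[2]$ is defined over $\Q$. Proposition~\ref{E1E2} does not arrange this. In general the exceptional locus is a $\P^1$-bundle over a degree-$16$ zero-dimensional scheme $Y$, so $[X']-[X]=\L\cdot[Y]$ and $\ntot([X'])-\ntot([X])$ is the permutation module on the nodes rather than $16[\one]$. (The paper's proof writes $\one(1)^{16}$ at the corresponding step, so it appears to make the same tacit assumption; either one imposes rational $2$-torsion on the $E_i$, which is compatible with the hypotheses of Proposition~\ref{E1E2}, or one carries the permutation module through the argument.)
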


\begin{proof}
The action of $\iota$
on the $\ell$-torsion of $E_1\times E_2$ and therefore on 
$$H^1(\overline{E_1\times E_2},\one)\isom \Hom(\bar E_1[\ell]\times \bar E_2[\ell],\one)$$
is by multiplication by $-1$; as the cohomology of every abelian variety is generated by $H^1$, $\iota$ acts on 
$H^q(\overline{E_1\times E_2},\one)$ by multiplication by $(-1)^q$.  Assuming $\ell>2$, It follows from Proposition~\ref{quotient}
that $H^q(\bar X,\one)$ is $0$ for $q$ odd and is 
$$H^2(\overline{E_1\times E_2},\one) \cong \one(1)\oplus H^1(\bar E_1,\one)\otimes H^1(\bar E_2,\one)\oplus \one(1)$$
for $q=2$.  For $q=0$ and $q=4$, we get $\one$ and $\one(2)$ respectively.

Let $Y$ denote the set of $16$ double points on $X$ and $Y'$ the inverse image of $Y$ in $X'$, consisting of $16$ copies of $\bbP^1$.  Let $U := X\setminus Y \cong X'\setminus Y'$.  The excision sequence for $U\subset X$ gives
$H^i_c(\bar U,\one)\isom H^i(\bar X,\one)$ for $i\ge 2$, and if $\ell$ is sufficiently large, the excision sequence for $U\subset X'$ gives a short exact sequence of $G_{\Q(\zeta_\ell)}$-modules
$$0\to H^2(\bar X,\one)\to H^2(\bar X',\one)\to \one(1)^{16}\to 0$$
and therefore
$$0\to H^1(\bar E_1,\one)\boxtimes H^1(\bar E_2,\one)\to H^2(\bar X',\one)\to \one(1)^{18}\to 0.$$
Regarding $H^2(\bar X',\one)$ as a representation of $G_{\Q(\zeta_\ell)}$, it factors through the Galois group $\Gal(K_1K_2/\Q(\zeta_\ell))$
which is isomorphic to $\SL_2(\F_\ell)^2$.  As $\SL_2(\F_\ell)^2$-representation it is an extension of an $18$-dimensional trivial representation by $V_1\boxtimes V_1$.  If $\ell$ is sufficiently large, this extension is trivial, since all indecomposable $\one$-representations of $\SL_2(\F_\ell)$ which are not irreducible have dimension at least $\ell-2$ \cite[Corollary~4.3]{AJL}.
As $H^0(\bar X',\one)$ and $H^4(\bar X',\one)$ are trivial one-dimensional representations of $G_{\Q(\zeta_\ell)}$ and
$H^1(\bar X',\one) = H^3(\bar X',\one) = 0$, the theorem follows.

\end{proof}

\section{Hilbert schemes of surfaces}

\newcommand{\Imm}{\mathrm{Imm}}
\newcommand{\fm}{\mathfrak{m}}
\def\open#1{{\breve X}^{#1}}
\def\X#1{{X^{\{#1\}}}}
\def\XX#1#2{{X_{#2}^{\{#1\}}}}

This section is devoted to a proof of an identity relating the classes of the Hilbert schemes of a non-singular surface to those of symmetric powers of the surface.
\begin{thm}
\label{Gottsche-Th}
If $X$ is a non-singular surface over a field $k$, we have an identity of power series in 
$K_0(\Var_k)$ as follows:
\begin{equation}
\label{Gottsche}
\sum_{n=0}^\infty \bigl[X^{[n]}\bigr]t^n = \prod_{i=1}^\infty \zeta_X(\L^{i-1}t^i).
\end{equation}
\end{thm}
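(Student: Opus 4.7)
The plan is to stratify the Hilbert scheme $X^{[n]}$ via the Hilbert-Chow morphism $\pi_n \colon X^{[n]} \to \Sym^n X$, and to match the resulting classes in $K_0(\mathrm{Var}_k)$ against the Taylor expansion of $\prod_i \zeta_X(\L^{i-1} t^i)$. This is the motivic analogue of G\"ottsche's classical Betti-number identity; what is new in arbitrary characteristic is to argue entirely within the Grothendieck ring, without recourse to Hodge theory or comparison with a transcendental model.

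First I would stratify $\Sym^n X$ by partition type: for each $\mu = (1^{a_1} 2^{a_2} \cdots)$ with $\sum_i i\, a_i = n$, let $(\Sym^n X)_\mu$ be the locally closed stratum of $0$-cycles with exactly $a_i$ distinct support points of multiplicity $i$. The key geometric step is to verify that $\pi_n^{-1}((\Sym^n X)_\mu) \to (\Sym^n X)_\mu$ is a piecewise-trivial fibration with fiber $\prod_i \mathrm{Hilb}^i(\A^2,0)^{a_i}$. This rests on $X$ being smooth of dimension two (so \'etale-locally $\A^2$) together with the $\mathbf{G}_m$-equivariant Bia\l{}ynicki-Birula cell decomposition of the punctual Hilbert scheme $\mathrm{Hilb}^i(\A^2,0)$, which refines the fibration to a piecewise-trivial one over any base field.

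Second, combine the stratification with the Ellingsrud-Str\o{}mme formula
$$\sum_{n \geq 0} [\mathrm{Hilb}^n(\A^2,0)]\,t^n = \prod_{i \geq 1} \frac{1}{1 - \L^{i-1} t^i},$$
which follows from the same cell decomposition and works over any field. Writing $h_i := [\mathrm{Hilb}^i(\A^2,0)]$ and $B_{(a_i)}$ for the open locus in $\prod_i \Sym^{a_i} X$ where all support points across all indices are pairwise distinct, the stratification and the fiber class yield
$$\sum_n [X^{[n]}]\,t^n = \sum_{(a_i)} [B_{(a_i)}]\, \prod_i h_i^{a_i}\, t^{\sum_i i a_i},$$
and the target is to show this equals $\prod_i \zeta_X(\L^{i-1} t^i) = \sum_{(a_i)} \prod_i [\Sym^{a_i} X]\, \L^{(i-1) a_i}\, t^{\sum_i i a_i}$.

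The main obstacle is this final matching. The two sides group contributions in different ways: the left is organized by support-coincidence pattern with the diagonal excised and punctual contributions $h_i^{a_i}$ attached, while the right is an unconstrained product of symmetric powers weighted by Lefschetz factors. The cleanest route is via the power structure on $K_0(\mathrm{Var}_k)$ of Gusein-Zade, Luengo, and Melle-Hern\'andez: under that structure $\zeta_X(t) = (1-t)^{-[X]}$ and the Hilbert scheme generating function equals $\bigl(\sum_n h_n t^n\bigr)^{[X]}$, so both sides coincide with $\prod_i (1 - \L^{i-1} t^i)^{-[X]}$. Alternatively, one performs a direct inclusion-exclusion over collision patterns in $\prod_i \Sym^{a_i} X$: the Ellingsrud-Str\o{}mme identity applied termwise within each collision stratum produces exactly the Lefschetz weights $\L^{(i-1) a_i}$ needed to reconstruct the unconstrained product.
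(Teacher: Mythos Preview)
Your outline is essentially the paper's approach: stratify $X^{[n]}$ via the Hilbert--Chow morphism by partition type, identify the fiber over each stratum as a product of punctual Hilbert schemes, invoke the Ellingsrud--Str\o{}mme formula for $[R_n]$, and then match against the expansion of $\prod_i \zeta_X(\L^{i-1}t^i)$ exactly as in G\"ottsche's original argument. The one place where the emphasis differs is that the paper regards the stratification and the combinatorial matching as already handled by G\"ottsche over any field, and devotes all of \S5 to the point you pass over in a clause---namely, establishing the Ellingsrud--Str\o{}mme identity $[R_n]=\sum_{\beta\in P(n)}[\A^{n-|\beta|}]$ over an arbitrary field not by invoking Bia\l{}ynicki--Birula but by writing down explicit affine charts $\Spec\cA_\beta$ indexed by partitions and checking directly (via a lemma that a morphism bijective on $F$-points for all $F/k$ gives equal classes) that these cells cover $R_n$; your appeal to BB over a general base is plausible but is precisely the step the authors felt needed a self-contained replacement.
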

This theorem is due to G\"ottsche \cite{Go2} in the case that $k$ is algebraically closed and of characteristic zero.  
His proof goes through in essentially the same way for any field.
The only point requiring elaboration is the key identity (due to Ellingsrud and Str\o mme \cite[Theorem~1.1(iv)]{ES} in the case $k=\C$) for the class in  $K_0(\Var_{k})$ of $[R_n]$.
Here $R_n$ denotes the (reduced) Hilbert scheme of codimension $n$ ideals of $k[[x,y]]$, or, equivalently, in $k[[x,y]]/(x,y)^n$, or, yet again, if $R$ is any $k$-algebra of Krull dimension $2$ and
$\fm$ any maximal ideal of $R$ such that $\dim_k R/\fm=1$ and $R$ is regular at $\fm$, the Hilbert scheme of $\fm$-primary codimension $n$ ideals of $R$.
It is convenient to take $R=k[x,y]$ and $\fm=(x,y)$.

The following identity holds for general $k$.

\begin{prop}
\label{ES}
Let $k$ be any field, and let $n$ be a positive integer.  Then
$$[R_n] = \sum_{\beta\in P(n)} [\A^{n-|\beta|}],$$
where $P(n)$ denotes the set of partitions $\beta$ of $n$, and $|\beta|$ is the number of parts of a partition.
\end{prop}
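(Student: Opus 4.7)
The plan is to extend the cell decomposition of Ellingsrud-Str\o mme \cite{ES} to an arbitrary base field $k$ by means of the Bia\l ynicki-Birula machinery. First I would endow $R_n$ with the $\bbG_m$-action induced by $t\cdot(x,y) = (tx, t^w y)$ on $k[x,y]/(x,y)^n$, where $w > n$ is chosen so that the weights on the monomial basis of $k[x,y]/(x,y)^n$ are pairwise distinct. The fixed points of the induced action on $R_n$ are exactly the codimension-$n$ monomial ideals, which are in canonical bijection with $P(n)$: to $\beta = (\beta_1 \geq \beta_2 \geq \cdots)$ one associates the ideal $I_\beta$ generated by the monomials $x^i y^j$ lying strictly above the staircase of $\beta$, the complementary monomials forming a basis of $k[x,y]/I_\beta$.

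For each $\beta\in P(n)$ I would then define the attracting cell
$$R_n^\beta := \{I \in R_n : \lim_{t \to 0} t\cdot I = I_\beta\},$$
a locally closed subscheme of $R_n$ defined over $k$. The core claim is that $R_n^\beta \cong \A^{n-|\beta|}$. The Ellingsrud-Str\o mme argument parametrizes ideals near $I_\beta$ by expressing the generators of $I$ as triangular perturbations $m + \sum c_{m,m'} m'$ of the monomial generators $m$ of $I_\beta$, with $m'$ ranging over the basis monomials below the staircase and $c_{m,m'}\in k$. The Hilbert-scheme equations together with the condition that all nonzero $c_{m,m'}$ carry strictly positive torus weight (equivalent to $I$ lying in the attracting locus of $I_\beta$) reduce these coefficients to exactly $n - |\beta|$ independent free parameters. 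Since this analysis uses only polynomial normal forms and monomial combinatorics, it is characteristic-free and descends to any $k$.

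Finally, since $R_n$ is a closed subscheme of the Grassmannian of codimension-$n$ subspaces of $k[x,y]/(x,y)^n$ and hence projective over $k$, the limit $\lim_{t\to 0} t\cdot I$ exists in $R_n$ for every $I$ and must lie in the fixed locus, so $R_n = \bigsqcup_{\beta\in P(n)} R_n^\beta$ set-theoretically. Additivity of $[\cdot]$ under stratification by locally closed subvarieties then gives
$$[R_n] = \sum_{\beta \in P(n)} [R_n^\beta] = \sum_{\beta \in P(n)} [\A^{n-|\beta|}]$$
in $K_0(\Var_k)$. The chief obstacle is verifying that the Ellingsrud-Str\o mme parametrization is genuinely characteristic-free---that no implicit use is made of the complex topology or of characteristic-zero features such as semicontinuity via integration or resolution of singularities---but since the construction is explicit and the existence of $\bbG_m$-limits is supplied algebraically by properness, the extension is primarily a matter of careful bookkeeping.
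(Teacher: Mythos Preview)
Your plan is essentially the same as the paper's---an Ellingsrud--Str\o mme cell decomposition indexed by partitions, verified to work over an arbitrary field---but the paper's execution differs from yours in two respects worth noting.

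First, the paper does not invoke a torus action or Bia\l ynicki--Birula at all. Instead it stratifies $R_n$ directly by the discrete invariant $\lambda_k := \dim (I:x^k)/(I:x^{k-1})$, and for each resulting partition $\lambda$ (dual to your $\beta$) writes down explicit ideal generators $Q_1,\ldots,Q_{r+1}$ in a polynomial ring $\cA_\beta = \Z[t_{ij}]$ over $\Z$, with exactly $n-\lambda_1 = n-|\beta|$ free parameters. A self-contained inductive lemma then shows that specializing these parameters to any field $F$ yields precisely the $(x,y)$-primary codimension-$n$ ideals with that $\lambda$-invariant, bijectively. This sidesteps any appeal to smoothness of $R_n$ (which fails) or to general results on attracting loci for singular schemes, and it makes the characteristic-independence completely transparent: everything is written over $\Z$.

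Second, the paper does not prove that the strata are scheme-theoretically isomorphic to affine spaces. It constructs a morphism $\A^{n-|\beta|}\to R_n$ landing in the stratum, shows it is a bijection on $F$-points for every extension $F/k$, and then invokes a short lemma to the effect that any morphism of $k$-varieties inducing bijections on all $F$-points yields equal classes in $K_0(\Var_k)$. Your route would instead need the attracting cells to be genuinely locally closed and isomorphic to affine space, which is true but requires more scheme-theoretic care. The paper's point-counting lemma is the more economical device here.

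The ``chief obstacle'' you flag---checking that the Ellingsrud--Str\o mme parametrization is characteristic-free---is exactly what the paper spends its effort on, and is the real content; your proposal identifies it correctly but leaves it as an exercise.
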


We prove the proposition by giving an explicit ``cell decomposition'' of $R_n$ and explicit parametrizations 
of the cells.  Toward this end, we introduce the following notation. 
Let $\beta$ and $\lambda$ be mutually dual partitions (i.e., partitions whose Ferrers diagrams are transpose to one another) with
\begin{align*}
r&=\beta_1\ge \beta_2\ge\cdots\ge \beta_s > \beta_{s+1}=0 \\
s&=\lambda_1\ge \lambda_2\ge \cdots\ge \lambda_r > \lambda_{r+1}=0.  
\end{align*}
Thus,
$$\beta_{\lambda_i+1}  < i \le \beta_{\lambda_i}$$
for $1\le i\le r$, and 
$$\lambda_{\beta_j+1} < j \le \lambda_{\beta_j}$$
for $1\le j\le s$.
For $\beta$ (and therefore $\lambda$) fixed, we define the polynomial ring $\cA_\beta := \Z[t_{ij}]$ where $1\le i< r$ and $1\le j \le  \lambda_{i+1}$
and recursively define (working from bottom right to top left as in the example depicted below)
the finite sequences of polynomials $Q_1, Q_2,\ldots,Q_{r+1}=1$ and $P_1=1,P_2,\ldots P_s$ in $\cA_\beta[x,y]$ as follows: for $1\le i \le r$,
$$Q_i := y^{\lambda_{i}-\lambda_{i+1}} Q_{i+1} + \sum_{j=1}^{\lambda_{i+1}} t_{ij} x^{\beta_j-i} P_j,$$
and for $1\le j\le s$,
$$P_j := y^{j-\lambda_{\beta_j+1}-1}Q_{\beta_j+1}.$$
\medskip
\begin{center}
\begin{tikzpicture}[scale=2, line width=1pt]
  \draw (0,0) grid (5,1);
  \draw (0,0) grid (3,2);
  \draw (0,0) grid (2,3);
  \draw (0,0) grid (2,4);
  \draw (0,0) grid (1,5);
  \node[right] at (5.1,.45) {$Q_6=1$};
  \node[right] at (4.1,1.2) {$Q_5=yQ_6$};
  \node[right] at (3.1,1.45) {$Q_4=Q_5+t_{41}xP_1$};
  \node[right] at (2.1,2.45) {$Q_3=yQ_4+t_{31}x^2P_1$};
  \node[right] at (1.1,4.45) {$Q_2=y^2Q_3+t_{21}x^3P_1+t_{22}xP_2$};
  \node[right] at (.1,5.45) {$Q_1=yQ_2+t_{11}x^4P_1+t_{12}x^2P_2+t_{13}xP_3+t_{14}xP_4$};
  \node[right] at (4.1,.45) {$P_1=Q_6$};
  \node[right] at (2.1,1.45) {$P_2=Q_4$};
  \node[right] at (1.1,2.45) {$P_3=Q_3$};
  \node[right] at (1.1,3.45) {$P_4=yQ_3$};
  \node[right] at (.1,4.45) {$P_5=Q_2$};

\end{tikzpicture}
\end{center}
\medskip
As $\beta_j\ge i+1$ when $j\le \lambda_{i+1}$, by descending induction, for  $1\le i \le r+1$
$$Q_i\in y^{\lambda_i} + (x),$$
and by (standard) induction it follows that 
$$P_i\in y^{i-1}+(x)$$
for $1\le i \le s$.
For $1\le i\le r+1$, we define
$$\cI_i = (Q_i, xQ_{i+1}, x^2Q_{i+2}, \ldots, x^{r+1-i}Q_{r+1}).$$
\begin{lem}
For any field $F$ and ring homomorphism $\phi\colon \cA_\beta\to F$, $I_1 := \cI_1\otimes_{\cA_\beta,\phi} F$ is an $(x,y)$-primary ideal of $F[x,y]$ of codimension $n$.
A linear complement for $I_1$ in $F[x,y]$ is given by 
$$\Span\{x^{i-1} y^j\mid 1\le i \le r,\; 0\le j < \lambda_i\}.$$
Moreover, every $(x,y)$-primary ideal of $F[x,y]$ of codimension $n$
satisfying 
$$\dim (I_1: x^k)/(I_1:x^{k-1}) = \lambda_k,\ k=1,\ldots,r$$
arises from one and only one $\phi$.
\end{lem}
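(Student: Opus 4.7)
The plan is to proceed in three linked stages, the last being the main source of difficulty.

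First, I would establish the colon identity $(\cI_k : x) = \cI_{k+1}$ in $\cA_\beta[x,y]$. From $\cI_k = (Q_k) + x\cI_{k+1}$ and the fact that $Q_k$ is monic in $y$ of degree $\lambda_k$ (so $x$ is a non-zero-divisor modulo $(Q_k)$), this reduces to showing $Q_k \in \cI_{k+1}$. The recursion provides this: $y^{\lambda_k - \lambda_{k+1}} Q_{k+1}$ lies in $(Q_{k+1})$, and each $x^{\beta_j - k} P_j = y^{j - \lambda_{\beta_j+1} - 1} \cdot x^{(\beta_j+1) - (k+1)} Q_{\beta_j+1}$ is a multiple of the generator $x^{(\beta_j+1) - (k+1)} Q_{\beta_j+1}$ of $\cI_{k+1}$. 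Base change along any $\phi$ then yields $(I_1 : x^{k-1}) = \cI_k \otimes_\phi F$ for every $k$.

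Next, since $Q_k \equiv y^{\lambda_k} \pmod x$, one has $\cI_k \otimes_\phi F + (x) = (x, y^{\lambda_k})$, so the exact sequence
$$0 \to (\cI_{k+1}\otimes_\phi F)/(\cI_k\otimes_\phi F) \to F[x,y]/(\cI_k\otimes_\phi F) \xrightarrow{\cdot x} F[x,y]/(\cI_k\otimes_\phi F) \to F[y]/(y^{\lambda_k}) \to 0$$
forces each successive quotient of the $x$-filtration to have dimension $\lambda_k$, giving $\dim F[x,y]/I_1 = n$. I would then show the $n$ proposed monomials span $F[x,y]/I_1$: any $x^a y^b$ with $a \geq r$ reduces via $x^r \in I_1$, and any $x^a y^b$ with $a<r$ and $b \geq \lambda_{a+1}$ reduces modulo $x^a Q_{a+1}$ (whose leading $y$-term is $x^a y^{\lambda_{a+1}}$) to monomials of strictly larger $x$-degree; this process terminates. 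Combined with the dimension count, this yields the claimed basis, and $(x,y)$-primality follows since the quotient is a finite-dimensional local ring.

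Finally, for the bijection with $\phi$, given $I$ satisfying the hypothesis I would reconstruct $\phi$ by descending induction on $i$. The hypothesis forces $x^r \in I$, and the exact sequence above applied to $I$ gives $(I:x^{k-1}) + (x) = (x, y^{\lambda_k})$; in particular $(I:x^{r-1}) = (x, y^{\lambda_r})$, so $Q_r := y^{\lambda_r}$ is canonical. Having reconstructed $Q_{i+1}, \ldots, Q_{r+1}$ and hence the $P_j$, I would seek parameters $t_{ij}$ making
$$Q_i := y^{\lambda_i - \lambda_{i+1}} Q_{i+1} + \sum_{j=1}^{\lambda_{i+1}} t_{ij}\, x^{\beta_j - i} P_j$$
lie in $(I:x^{i-1})$. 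Uniqueness of the $t_{ij}$ is immediate because the leading monomials $x^{\beta_j - i} y^{j-1}$ of the $x^{\beta_j - i} P_j$ are pairwise distinct. The main obstacle is existence: it amounts to showing that the image of $-y^{\lambda_i - \lambda_{i+1}} Q_{i+1}$ in the $\lambda_i$-dimensional quotient $(I:x^i)/(I:x^{i-1})$ lies in the $\lambda_{i+1}$-dimensional span of the images of the $x^{\beta_j - i} P_j$. I expect to verify this by tracking leading $y$-exponents in the associated graded of the $x$-filtration, using that $P_j$ has leading $y$-term $y^{j-1}$ and that each relevant element arises canonically from already-reconstructed generators. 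Once $\phi$ is extracted, the ideal $\cI_1 \otimes_\phi F$ is contained in $I$ by construction and has codimension $n$ by the dimension computation above, hence equals $I$.
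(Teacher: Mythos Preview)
Your first two stages are correct and match the paper's argument closely; the colon identity, the dimension count via the multiplication-by-$x$ exact sequence, the monomial basis, and $(x,y)$-primality all go through as you describe. Your third-stage inductive framework is also equivalent to the paper's (which separates injectivity and surjectivity of $\phi\mapsto I_1$ but rests on the same computation).

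The genuine gap is your uniqueness claim. You assert that uniqueness of the $t_{ij}$ is ``immediate because the leading monomials $x^{\beta_j-i}y^{j-1}$ of the $x^{\beta_j-i}P_j$ are pairwise distinct.'' Distinct leading monomials only give linear independence of the $x^{\beta_j-i}\bar P_j$ as elements of $F[x,y]$; what you actually need is that their span meets $(I:x^{i-1})$ trivially, and this does not follow. It is precisely the crux of the lemma. The paper's argument runs as follows: if $\sum_j c_j x^{\beta_j-i}\bar P_j$ lies in the ideal $I_i$ with some $c_j\ne 0$, set $m=\min\{\beta_j:c_j\ne 0\}$, use the colon identity to get $I_i\cap(x^{m-i})=x^{m-i}I_m$, divide by $x^{m-i}$, and reduce modulo $x$; the surviving terms (those with $\beta_j=m$) give a nontrivial combination of $y^{j-1}$ with $j-1<\lambda_m$ lying in $(y^{\lambda_m})$, which is absurd. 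Your existence step, which you honestly flag as open, reduces to this same independence statement: with $(I:x^i)=\cI_{i+1}\otimes_\phi F$ known inductively, the image of $\cdot\, x\colon (I:x^{i+1})/(I:x^i)\hookrightarrow (I:x^i)/(I:x^{i-1})$ is the kernel of reduction mod $x$ to $(y^{\lambda_{i+1}})/(y^{\lambda_i})$, and the class of $y^{\lambda_i-\lambda_{i+1}}\bar Q_{i+1}$ lies there since it reduces to $y^{\lambda_i}$. The $\lambda_{i+1}$ classes $[x^{\beta_j-i}\bar P_j]$ also lie in that $\lambda_{i+1}$-dimensional image, so they span it exactly when they are independent. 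Hence once you supply the missing independence argument, existence follows by this dimension count---just as the paper deduces surjectivity of $F^{\lambda_2}\to I_3/I_2$ from its injectivity.
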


\begin{proof}
Setting $I_k := \cI_k\otimes_{\cA,\phi} F$, we have
$$I_k = (\bar Q_k, x \bar Q_{k+1},\ldots, x^{r+1-k}\bar Q_{r+1}),$$
where $\bar Q_k := Q_k\otimes 1$ belongs to $y^{\lambda_k} + (x)\subset F[x,y]$.  
As
$$\bar Q_{k} = y^{\lambda_{k}-\lambda_{k+1}} \bar Q_{k+1} + \sum_{j=1}^{\lambda_{k+1}} a_{kj} x^{\beta_j-k} y^{j-\lambda_{\beta_j+1}-1}\bar Q_{\beta_j+1},$$
where $a_{kj} := x_{kj}\otimes 1 = \phi(x_{kj})$, we have
$$x \bar Q_k \in (x\bar Q_{k+1},\ldots,x^{r+1-k} \bar Q_{r+1}) = x I_{k+1},$$
so $R \bar Q_k\in x I_{k+1}$ if and only if $R\in (x)$.  This means an element of $I_k$ belongs to $(x)$ if and only if it belongs to $xI_{k+1}\subset I_k$, i.e.
$$(I_k:x) = I_{k+1}$$
for $1\le k \le r$.  By induction,
$(I_k:x^j) = I_{k+j}$ for $1\le k < k+j \le r+1$.
As $I_{r+1}$ is the unit ideal, $x^r\in I_1$, so the image of $x$ in $F[x,y]/I_1$ is nilpotent.  As $y^{\lambda_1}$ is divisible by $x$ (mod $I_1$), it follows that $y$ is
nilpotent in $F[x,y]/I_1$.  Thus $I_1$ is $(x,y)$-primary.

The composition of maps $I_k\hookrightarrow F[x,y] \twoheadrightarrow F[y]$
sends $x^i \bar Q_{k+i}$ to $0$ for $i>0$ and sends $\bar Q_k$ to $y^{\lambda_k}$.
Thus, we have an isomorphism
\begin{equation}
\label{slice}
F[x,y]/(I_k+(x)) \isom F[y]/(y^{\lambda_k}).
\end{equation}

We prove by descending induction that the span of
\begin{equation}
\label{complement}
\{x^{i-k} y^j\mid k\le i \le  r,\;  0\le j < \lambda_i\}
\end{equation}
is complementary to $I_k$ in $F[x,y]$.
This is trivial for $k=r+1$.  
Multiplication by $x$ gives an isomorphism  
$$F[x,y]/I_{k+1}= F[x,y]/(I_k:x)\to (x)/(I_k\cap (x)).$$
By (\ref{slice}), the short exact sequence 
$$0\to (x)/(I_k\cap (x)) \to F[x,y]/I_k \to F[x,y]/(I_k+(x))\to 0$$
can be rewritten
$$0 \to F[x,y]/I_{k+1} \to F[x,y]/I_k \to F[y]/(y^{\lambda_k})\to 0.$$
By induction, 
$$\dim F[x,y]/I_k = \lambda_k+\lambda_{k+1}+\cdots+\lambda_r.$$
To prove the image of
(\ref{complement}) spans $F[x,y]/I_k$, we assume the corresponding statement for $k+1$.
Then
$$\{x^{i-k} y^j\mid k+1\le i \le  r,\;  0\le j < \lambda_i\}$$
spans $(x)/(I_{k+1}\cap(x))$ and the image of 
$$\{y^j\mid  0\le j < \lambda_k\}$$
spans $F[y]/(y^{\lambda_k})$, so the image of
(\ref{complement}) spans $F[x,y]/I_k$.

Next we claim that $I_1$ determines $\phi$.  Equivalently, $I_1$ determines $a_{ij} = \phi(x_{ij})$.
We prove by descending induction that $I_k$ determines $a_{ij}$ for all $i\ge k$.  This is trivial for $k\ge r+1$.
Assume it holds for $k+1$.  As $I_{k+1} = (I_k:x)$ determines $a_{ij}$ for $i\ge k+1$ (and therefore determines $\bar Q_{k+1},\ldots,\bar Q_{r+1}$), 
we need only consider the case $i=k$.
It suffices to prove that 
$$I_k\cap \Span\{x^{\beta_j-k}\bar P_j\mid 1\le j < \lambda_{k+1}\} = \{0\}.$$
Indeed, if 
$$\sum_{j=1}^{\lambda_{k+1}-1} c_j x^{\beta_j-k} \bar P_j\in I_k$$
and $m := \min\{\beta_j\mid c_j \neq 0\}$,
then this linear combination
lies in $I_k\cap (x^{m-k}) = x^{m-k}I_m$, and we have
$$\sum_{j=1}^{\lambda_m} c_j x^{\beta_j-m}\bar P_j \in I_m.$$
Reducing (mod $x$), we have a non-trivial linear combination of $y^{j-1}$ for $j \le \lambda_{\beta_j}\le  \lambda_m$ belonging to $(y^{\lambda_m})$, which is impossible.

Finally, we claim that every $(x,y)$-primary codimension-$n$ ideal of in $F[x,y]$ can be expressed as $I_1$ for some partition $\lambda$ of $n$ and some $\phi$.
Defining
$$\lambda_i = \dim (I:x^i)/(I:x^{i-1}),$$
we have $\lambda_1\ge \lambda_2\ge\cdots$ since multiplication by $x$ defines an injection
$$(I:x^{i+1})/(I:x^i)\hookrightarrow (I:x^i)/(I:x^{i-1}),\ i\ge1,$$
and $\sum_{i=1}^\infty \lambda_i = n$ since $(I:x^m)=F[x,y]$ for $m$ sufficiently large.  This determines $\lambda$,
and now we must show that the parameters $a_{ij}$ can be chosen so that $I_1 = I$.  We use induction on the number of parts in the partition.

Given $I$ with associated partition $\lambda_1\ge \lambda_2\ge \cdots$, let $J := (I:x)$, which is associated to $\lambda_2\ge \lambda_3\ge \cdots$.
By the induction hypothesis, there exist $a_{ij}\in F$ for $2\le i < r$, $1\le j\le \lambda_{i+1}$ such that 
$$I_2 = (\bar Q_2, x\bar Q_3,\ldots,x^{r-1}\bar Q_{r+1})$$
coincides with $J$.
The image of $I$ by the  (mod $(x)$) reduction map $F[x,y]\to F[y]$ is 
$(y^{\lambda_1})$, so $I =  (\bar Q_1)+xI_2$ for some $\bar Q_1$ of the form $y^{\lambda_1-\lambda_2}\bar Q_2 + x\alpha$, where
$$x\alpha \in (x) \cap J = (x)\cap I_2 = xI_3,$$
i.e., $\alpha\in I_3$.  On the other hand, if $\alpha - \beta \in I_2$, then
$$(y^{\lambda_1-\lambda_2} \bar Q_2 + x\alpha) + xI_2 = (y^{\lambda_1-\lambda_2} \bar Q_2 + x\beta) + xI_2.$$
It suffice to prove that every class in $I_3/I_2$ is represented by some $\alpha$ of the form $\sum_{i=1}^{\lambda_2}a_{1j}x^{\beta_j-2} y^{j-\lambda_{\beta_j}-1}\bar Q_{\beta_j+1}$.
Composing the map $F^{\lambda_2}\to I_3$ given by 
$$(a_{11},\ldots,a_{1\lambda_2})\mapsto \sum_{j=1}^{\lambda_2} a_{1j}x^{\beta_j-2} y^{j-\lambda_{\beta_j}-1}\bar Q_{\beta_j+1}\in I_3$$
with the quotient map $I_3\twoheadrightarrow I_3/I_2$, we get an injective map between vector spaces of dimension $\lambda_2$, which must therefore be surjective.

\end{proof}

Now we can prove Proposition~\ref{ES}.

\begin{proof}
It therefore suffices to prove the equivalent form
$$[R_n] = \sum_{\lambda\in P(n)} [\A^{n-\lambda_1}].$$

As $\cI_1$ contains $(x,y)^n$, if $M$ denotes $\cA_\beta$-module of polynomials of degree $< n$,
we have an isomorphism of $\cA_\beta$-modules $M/M\cap \cI_1\isom \cA_\beta/\cI_1$.  The $\cA_\beta$-linear map
$$\Span_{\cA_\beta}\{x^i y^j\mid 0\le i < r,\; 0\le j < \lambda_{i+1}\} \to  \cA_\beta/\cI_1$$
becomes an isomorphism after tensoring by any residue field of $\cA_\beta$, so by Nakayama's lemma, it must be an isomorphism.
Thus $\cA_\beta[x,y]/\cI_1$ is a free $\cA_\beta$-module, and this remains true after tensoring over $\Z$ with $k$.
If $S=\Spec \cA_\beta\otimes_{\Z} k$ and $Z = \Spec \cA_\beta[x,y]/\cI_1 \otimes_{\Z} k$, then $Z\to S$ is flat and therefore defines an $S$-point of
the Hilbert scheme $(\A^2)^{[n]}$, and since every geometric point of $S$ corresponds to a $(x,y)$-primary ideal, it follows that
$S$ maps to $R_n$.  At the level of $F$-points, this map gives a bijection between $(x,y)$-primary ideals associated to $\lambda$ and $F$-points of $S$.
The proposition now follows from the following lemma.
\end{proof}

\begin{lem}
Let $k$ be a field and $\phi\colon Y\to X$ a morphism of $k$-varieties.  If for all extension fields 
$F$ of $k$,  $\phi$ defines a bijection from $Y(F)$ to $X(F)$, then $[X] = [Y]$ in $K_0(\Var_k)$.
\end{lem}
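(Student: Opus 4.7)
The plan is to prove the lemma by Noetherian induction on $X$. The key step is to find a dense open subset $U \subseteq X$ such that the restriction $\phi\colon \phi^{-1}(U) \to U$ is an isomorphism of $k$-varieties; once this is available, the bijection-on-$F$-points hypothesis restricts to the closed complement $Y \setminus \phi^{-1}(U) \to X \setminus U$ (since an $F$-point of $X$ lies in $X \setminus U$ iff its unique preimage lies outside $\phi^{-1}(U)$), so that Noetherian induction together with the scissor relations gives $[Y] = [\phi^{-1}(U)] + [Y \setminus \phi^{-1}(U)] = [U] + [X \setminus U] = [X]$.

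To produce such a $U$, I would first reduce to $X$ irreducible by treating each irreducible component and the (lower-dimensional) locus where components intersect separately. Assuming $X$ irreducible with generic point $\eta$ and function field $K := \kappa(\eta) = K(X)$, I would analyze the scheme-theoretic generic fibre $Y_\eta := Y \times_X \Spec K$. Applying the hypothesis to an arbitrary field extension $F \supseteq K$ with $F$-point of $X$ given by the composition $\Spec F \to \Spec K \to X$, we obtain $|Y_\eta(F)| = 1$ for every such $F$.

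The crux is then to deduce that $Y_\eta$ contains a point with residue field exactly $K$. Specializing to $F = \bar K$, the algebraic closure, $|Y_\eta(\bar K)| = 1$ forces $Y_\eta$ to be zero-dimensional with a unique closed point $\eta'$ whose residue field $L := \kappa(\eta')$ is purely inseparable over $K$. Specializing now to $F = K$, the identity $1 = |Y_\eta(K)| = |\Hom_K(L, K)|$ excludes the possibility $L \supsetneq K$ (no proper purely inseparable extension embeds $K$-linearly into its base field) and forces $L = K$. Hence $\eta'$ is the generic point of a unique irreducible component $Y_0$ of $Y$, and $\phi\colon Y_0 \to X$ is birational. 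Since the remaining components of $Y$ map into proper closed subvarieties of $X$, I can shrink to a dense open $U \subseteq X$ where $\phi^{-1}(U) \subseteq Y_0$ and $\phi^{-1}(U) \to U$ is an isomorphism of integral $k$-varieties.

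The main obstacle is this step of excluding non-trivial purely inseparable extensions in positive characteristic: the hypothesis $|Y_\eta(\bar K)| = 1$ alone is compatible with $Y_\eta = \Spec L$ for such an $L$, and one must invoke the full hypothesis at $F = K$ itself to rule this out. Separation and finite-type hypotheses on $X$ and $Y$ ensure the Noetherian induction terminates, and with $U$ produced the rest of the argument is purely formal.
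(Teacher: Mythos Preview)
Your proposal is correct and follows essentially the same route as the paper's proof: Noetherian induction on $X$, analysis of the generic fibre $Y_\eta$ to show it is $\Spec K$, spreading out to an open $U$ over which $\phi$ is an isomorphism, and then passing to the complements. The only cosmetic differences are that the paper skips the reduction to irreducible $X$ (it simply picks the generic point of one component) and argues uniqueness of the point of $Y_\eta$ directly from arbitrary extensions rather than via $\bar K$ first; your two-step analysis via $\bar K$ and then $F=K$ reaches the same conclusion.
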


\begin{proof}
Suppose $K$ is a field and $Y_K$ a $K$-variety such that for every extension field $L$ of $K$, there is a unique morphism $\Spec L\to Y_K$ lifting $\Spec L\to \Spec K$.
If $y_1,y_2$ are points on $Y_K$ with residue fields $K_1$ and $K_2$ over $K$, we can choose a field $\Omega$ in which $K_1$ and $K_2$ both embed as subfields,
so $Y_K$ has at least two distinct $\Omega$-points, contrary to assumption.  Thus $Y_K$ has a single point, so it is affine: $Y_K = \Spec A_K$ for some reduced $K$-algebra
$A_K$.  The nilradical corresponds to the unique maximal ideal, and it is zero since $Y_K$ is a variety, so $A_K$ is a field extension of $L$.  On the other hand, the identity map $\Spec K\to \Spec K$ lifts to $\Spec K\to \Spec A_K$, so the extension $K\to A_K$ has an inverse, which means it is trivial.

We apply this in the case that $K$ is the residue field of the generic point $\eta$ of a component of $X$ and $Y_K$ is the fiber of $Y$ over $\eta$.  The conclusion is that there exists a point $\eta'$ in $Y$ over $\eta$ for which $\phi$ gives an isomorphism of residue fields.  Thus, there exist open neighborhoods $U$ of $\eta$ in $X$ and $U'$ of $\eta'$ in $Y$ such that
$\phi^{-1} (U) = U'$  and $\phi$ induces an isomorphism $U'\to U$.  Replacing $Y$ and $X$ by $Y\setminus U'$ and $X\setminus U$ respectively, the restriction of $\phi$ induces a map on $F$-points for all extensions $F$ of $k$, and the lemma follows by Noetherian induction.

\end{proof}

\section{Chow motives and finite Galois modules}

Fix a field $k$ and denote by $V(k)$ the category of smooth and projective $k$-varieties and arbitrary morphisms of such varieties. Given $X\in V(k)$ of dimension $d$ we consider the graded Chow ring $A^*(X)=\oplus _{r=0}^{d}A^{d-r}(X)$ of cycles on $X$ modulo rational equivalence, where the group $A^{d-r}(X)=A_r(X)$ consists of classes of cycles of dimension $r$ \cite{Fulton}.
Let us recall a version of the category of Chow motives that is appropriate for our needs. First consider the additive category $\Cor(k)$ whose objects are the objects of $V(k)$ and morphisms are the degree zero Chow correspondences. That is given $X,Y\in \Cor(k)$, $X$ being of pure dimension $d$, we set
$$\Hom _{\Cor(k)}(X,Y):=A^d(X\times Y)$$
The composition of morphisms is the composition of correspondences \cite{Manin}. The category $\Cor(k)$ is the ``additivization" of the category $V(k)$. Next one defines the category $\Chow(k)$ of Chow motives as the idempotent completion of $\Cor(k)$. Explicitly the objects of $\Chow(k)$ are pairs
$(X,p)$, where $X\in V(k)$ and $p\in \End _{\Cor(k)}(X)$ is a projector: $p^2=p$. Morphisms
between $(X,p)$ and $(Y,q)$ form the group $q\cdot \Hom _{\Cor(k)}(X,Y)\cdot p$. There is a canonical contravariant functor
$V(k)\to \Chow(k)$ which sends $X\in V(k)$ to $(X,\bf{1})$ and a morphism $f:X\to Y$ to its graph $\Gamma _f\subset  Y\times X$.
Let $e\in \Chow(k)$ be the image of $\Spec k$.
The category $\Chow(k)$ is a tensor category with the product
$$(X,p)\otimes (Y,q)=(X\times Y,p\otimes q)$$
There exists an object ${\bf L}\in \Chow(k)$, called the Tate motive, such that $\bbP ^1=e\oplus \bf{L}$ \cite{Manin}.
For $(X,p)\in \Chow(k)$ we denote as usual the product $(X,p)\otimes {\bf L}$ by $(X,p)(-1)$
.
Given a nonzero integer $n$ we denote by $\Chow(k)[1/n]$ the localization at $n$ of the additive category $\Chow(k)$, i.e. for $A,B\in \Chow(k)$ we have
$$\Hom _{\Chow(k)[1/n]} (A,B)=\Hom _{\Chow(k)}(A,B)\otimes _\bbZ \bbZ[1/n]$$
So $\Chow(k)[1/n]$ is a $\bbZ[1/n]$-linear tensor category. We also consider the category $\Chow(k)_{\bbQ}$ of rational Chow motives constructed in a similar way.

\begin{example} \label{ex-proj-gr} Let $X\in V(k)$ be a variety of pure dimension $d$ with an action of a finite group $G$ of order $n$. Then $p:=\frac{1}{n}\sum _{g\in G}\Gamma _g\in A^d(X\times X)\otimes_{\bbZ} \bbZ[1/n]$ is a projector. Hence $(X,p)\in \Chow(k)[1/n]$.
\end{example}

Given a field extension $k\subset k^\prime$ we obtain the obvious functors $V(k)\to V(k^\prime )$, $\Cor(k)\to \Cor(k^\prime )$, $\Chow(k)\to \Chow(k^\prime )$, etc. induced by the extension of scalars $X\mapsto X_{k^\prime}=X\times_kk^\prime$ of varieties \cite[Example 6.2.9]{Fulton}.
If $k^\prime =\bar{k}$, as usual, we denote the variety $X\times_k\overline{k}$ by $\bar{X}$. For a prime $l\neq \mathrm{char}(k)$ let $\zeta _l\in \bar{k}$ be an $l$-th root of 1.

\begin{prop} \label{prop-exist-real} Let $n$ be a nonzero integer and $l$ be a prime number not dividing $n$ and different from the characteristic of the base field k. Then the assignment
$$X\mapsto H^\bullet _{\et}(\bar{X},\one),\quad X\in V(k)$$
extends to a tensor (contravariant) functor from the category $\Chow(k)[1/n]$ to the abelian tensor category of  finite dimensional $\one$-modules with a continuous $\Gal_k$-action:
$$\Phi _l:\Chow(k)[1/n]\to \one\text{-}\Gal_k\hmod$$

If $k$ contains $\zeta _l$, then the module $\Phi _l({\bf L})$ is a 1-dimensional trivial $\one\text{-}\Gal_k$-module placed in degree 2.
\end{prop}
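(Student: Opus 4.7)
The plan is to construct $\Phi_\ell$ in three stages: first on the additive category $\Cor(k)$, then extend to its idempotent completion $\Chow(k)$, and finally observe that $\Phi_\ell$ factors through the localization at $n$. On objects I would set $\Phi_\ell(X) := H^\bullet(\bar X, \one)$, a finite-dimensional graded $\one$-module with continuous $\Gal_k$-action. Given a degree-zero correspondence $\alpha \in A^{d_Y}(Y \times X)$ with $d_Y := \dim Y$, I would apply the \'etale cycle class map $\cycl\colon A^{d_Y}(Y \times X) \to H^{2d_Y}(\overline{Y \times X}, \one(d_Y))$ and use Poincar\'e duality for the smooth projective variety $\bar Y$ to produce the $\Gal_k$-equivariant graded morphism
\[
\Phi_\ell(\alpha)\colon H^\bullet(\bar X, \one) \to H^\bullet(\bar Y, \one),\qquad x \mapsto (p_Y)_*\bigl(\cycl(\alpha) \cup p_X^* x\bigr),
\]
where $p_X, p_Y$ are the projections from $Y \times X$ and $(p_Y)_*$ is proper pushforward. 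That $\Phi_\ell(\alpha)$ depends only on the rational equivalence class of $\alpha$ is built into the fact that $\cycl$ factors through the Chow group.

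Next I would verify functoriality: for $\alpha \in A^{d_Y}(Y \times X)$ and $\beta \in A^{d_Z}(Z \times Y)$, one must check $\Phi_\ell(\alpha \circ \beta) = \Phi_\ell(\beta) \circ \Phi_\ell(\alpha)$. The approach is the standard computation on the triple product $Z \times Y \times X$: using compatibility of the cycle class with intersection and flat pullback, together with the projection formula, one expresses both sides as a single proper pushforward from the triple along the product of outer projections. Naturality of the K\"unneth isomorphism in \'etale cohomology, which is automatically $\Gal_k$-equivariant, then gives $\Phi_\ell$ the structure of a tensor functor on $\Cor(k)$.

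Since the target category is $\one$-linear and abelian, idempotents split, so $\Phi_\ell$ extends uniquely to $\Chow(k)$. Because $\ell \nmid n$, $\one$ is a $\bbZ[1/n]$-algebra and the Hom groups in the target are $\bbZ[1/n]$-modules, so $\Phi_\ell$ descends to $\Chow(k)[1/n]$. For the Tate-motive assertion, I would apply $\Phi_\ell$ to the decomposition $\bbP^1 = e \oplus \mathbf{L}$: since $H^\bullet(\bar\bbP^1, \one) = \one \oplus \one(-1)$ with $\one$ in degree $0$ and $\one(-1)$ in degree $2$, we must have $\Phi_\ell(\mathbf{L}) = \one(-1)$ placed in degree $2$. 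If $\zeta_\ell \in k$, then $\Gal_k$ acts trivially on $\mu_\ell \subset \bar k^\times$, hence trivially on $\one(1) = \mu_\ell \otimes_{\F_\ell} \one$ and on its dual $\one(-1)$, giving triviality of $\Phi_\ell(\mathbf{L})$.

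The main obstacle is the composition identity; the rest is formal. It rests on compatibility of \'etale cycle classes with intersection products and proper pushforwards, and on the projection formula—all standard facts for \'etale cohomology with $\one$ coefficients once $\ell$ is invertible in $k$—but it requires careful Tate-twist bookkeeping so that the degree shifts and Galois twists on the triple product $Z \times Y \times X$ cancel correctly and reproduce $\beta \circ \alpha$ as defined via the outer projections.
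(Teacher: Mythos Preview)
Your proposal is correct and takes essentially the same approach as the paper: define the action of a correspondence via cycle class, cup product, and proper pushforward, then pass to the idempotent completion and localize at $n$. The only organizational difference is that the paper first builds the functor $\Psi_\ell\colon \Cor(\bar k)\to \one\hvect$ over the algebraic closure and then notes that precomposing with scalar extension $\Cor(k)\to\Cor(\bar k)$ automatically yields $\Gal_k$-equivariance, and it pins down the needed properties of the cycle map (contravariant ring morphism, compatibility with exterior products, covariance for proper pushforward) with explicit references to SGA~$4\frac12$ and Laumon rather than calling them standard.
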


We do not claim originality for this proposition, but for lack of a reference, we provide a proof.

\begin{proof} Since the category $\one\text{-}\Gal_k\hmod$ is Karoubian and its localization $(\one\text{-}\Gal_k\hmod)[1/n]$ is equivalent to
 $\one\text{-}\Gal_k\hmod$, it suffices to construct a functor
from the additive category $\Cor(k)$ to $\one\text{-}\Gal_k\hmod$. We construct this functor as the composition of the extension of scalars functor $\Cor(k)\to \Cor(\bar{k})$ with a functor
$$\Psi _l:\Cor (\bar{k})\to \one\hvect$$
where $\one\hvect$ is the category of  $\one$-vector spaces. The functor $\Psi _l$ is defined as follows. Let $X$ and $Y$ be smooth projective varieties (over $\bar{k}$), $X$ being of pure dimension $d$, and let $C\in A^d(X\times Y)$ be a correspondence of degree zero. Consider the projections $X\stackrel{p_X}{\leftarrow} X\times Y\stackrel{p_Y}{\rightarrow} Y$.  Then given an element $a\in H^i_{\et}(Y,\one)$ we put
$$\Psi _l(C)(a)=p_{X*}(\cycl_{X\times Y}(C)\cup p^*_Y(a))\in H^i_{\et}(X,\one)$$
where $\cycl_{X\times Y}:A^s(X\times Y)\to H^{2s}_{\et}(X\times Y,\one)$ is the {\it cycle map} \cite{SGA45}, \cite[Ch.VI, 9]{Mi}, and $p^*_Y$ and $p_{X*}$ are the pullback and the pushforward maps on cohomology \cite[Ch. VI, Remark 11.6]{Mi}. In order for $\Psi _l$ to be a functor, the cycle map has to satisfy the following properties for morphisms of smooth and projective varieties:
\begin{itemize}
\item $\cycl$ is a morphism of contravariant functors from $V(\bar{k})$ to the category of  rings.
\item $\cycl$ commutes with exterior products.
\item $\cycl$ is a morphism of covariant functors from $V(\bar{k})$ to the category of  abelian groups.
\end{itemize}

The first two properties are proved in \cite[2.3.9 and 2.3.8.3]{SGA45}, and the last one is in \cite[Theorem 6.1]{Laumon}.

Once the functor $\Psi _l$ is constructed, it is clear that its composition with the extension of scalars $\Cor(k)\to \Cor(\bar{k})$ will give the desired functor $\Phi _l$, since for $X\in \Cor(k)$ the vector space $H^\bullet _{\et}(\bar{X},\one)$ is a $\Gal_k$-module and morphisms in $\Cor(k)$ act as morphisms of $\Gal_k$-modules. Also the last assertion of the proposition is obvious.  This proves Proposition~\ref{prop-exist-real}.

\end{proof}

\begin{example} \label{example-quot} Let $(X,p)\in \Chow(k)[1/n]$ be as in Example \ref{ex-proj-gr} and let $l$ be prime to $n$ and $l\neq \mathrm{char}(k)$. Then $\Phi _l((X,p))=H^\bullet _{\et}(\bar X,\one)^G$ as $\one\text{-}\Gal _{k}$-modules.
\end{example}

\begin{cor}\label{cor-from-real} Assume that in $\Chow(k)_{\bbQ}$ we have an isomorphism of objects $A\simeq B$. Then for a divisible enough integer $n$ the objects $A$ and $B$ belong to the essential image of the category $\Chow(k)[1/n]$ and are isomorphic in $\Chow(k)[1/n]$. Fix one such $n$ and let $l$ be a prime not dividing $n$ and $l\neq \mathrm{char}(k)$. Then the $\one\text{-}\Gal _{k}$-modules $\Phi _l(A)$ and $\Phi _l(B)$ are defined and are isomorphic.
\end{cor}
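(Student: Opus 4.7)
The plan is to reduce the second assertion to the first and then establish the first by clearing denominators, with a small adjustment for torsion in Chow groups. For the reduction, once $A$ and $B$ lift to isomorphic objects of $\Chow(k)[1/n]$, Proposition~\ref{prop-exist-real} supplies a tensor functor $\Phi_l\colon \Chow(k)[1/n]\to \one\text{-}\Gal_k\hmod$ for any prime $l$ coprime to $n$ with $l\neq \mathrm{char}(k)$; since any functor preserves isomorphisms, this immediately yields $\Phi_l(A)\simeq \Phi_l(B)$.

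For the first assertion I would make the data entirely explicit. Write $A=(X,p_A)$ and $B=(Y,p_B)$ with $p_A\in A^{\dim X}(X\times X)_{\bbQ}$ and $p_B\in A^{\dim Y}(Y\times Y)_{\bbQ}$ rational projectors, and choose cycles $f\in p_B\cdot A^{\dim X}(X\times Y)_{\bbQ}\cdot p_A$ and $g\in p_A\cdot A^{\dim Y}(Y\times X)_{\bbQ}\cdot p_B$ representing the isomorphism and its inverse, so that $g\circ f = p_A$ and $f\circ g = p_B$ hold in rational Chow. Each of the four cycles is a finite $\bbQ$-linear combination of integral cycle classes, and if $N_1$ is the lcm of the denominators that appear, then all four lie in the corresponding Chow groups tensored with $\bbZ[1/N_1]$. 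Taking $n$ to be a sufficiently divisible multiple of $N_1$ would then exhibit $(X,p_A)$ and $(Y,p_B)$ as genuine objects of $\Chow(k)[1/n]$ and $f,g$ as mutually inverse morphisms between them.

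The main (and essentially only) obstacle is passing from rational identities to $\bbZ[1/n]$-integral ones, which is nontrivial because integral Chow groups of $X\times X$, $Y\times Y$, $X\times Y$, $Y\times X$ can have torsion. A priori the differences $p_A\circ p_A-p_A$, $p_B\circ p_B-p_B$, $g\circ f-p_A$, $f\circ g-p_B$, computed in the $\bbZ[1/N_1]$-coefficient Chow groups, are torsion classes (they vanish after tensoring up to $\bbQ$), and to enforce the defining equations one must further enlarge the set of inverted primes. I would choose an integer $N_2$ that annihilates these four torsion classes and let $n$ be any multiple of $N_1N_2$; then all four identities hold in $\bbZ[1/n]$-coefficient Chow, which completes the first assertion and, via the reduction above, the corollary. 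No further cohomological input beyond the already-established functor $\Phi_l$ is required.
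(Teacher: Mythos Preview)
Your argument is correct and follows the same strategy as the paper's proof, which simply observes that the isomorphism is witnessed by a finite diagram of correspondences with denominators and hence lifts to $\Chow(k)[1/n]$ for $n$ divisible enough, then invokes Proposition~\ref{prop-exist-real}. Your explicit treatment of the torsion obstruction---the possible failure of the lifted identities $p_A^2=p_A$, $g\circ f=p_A$, etc.\ over $\bbZ[1/N_1]$, remedied by further inverting an annihilator $N_2$---makes precise exactly what the paper's phrase ``divisible enough'' is absorbing.
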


\begin{proof} Indeed, an isomorphism in $\Chow(k)_{\bbQ}$ between $A$ and $B$ is witnessed
by a finite diagram of objects and correspondences with denominators. Hence this diagram exists in $\Chow(k)[1/n]$ for a divisible enough $n$. So $A\simeq B$  in such category $\Chow(k)[1/n]$. The last assertion now follows from Proposition \ref{prop-exist-real}.
\end{proof}

The category $\Chow(k)_{\bbQ}$ can be extended to include objects $(X^\prime ,p^\prime)$ where $X^\prime $ is a quotient variety
under the action of a finite group on a smooth projective variety, and $p^\prime \in A(X^\prime \times X^\prime)_{\bbQ}$ is a projector \cite[Example 16.1.13]{Fulton}. Denote the resulting category by $\Chow(k)_{\bbQ}^\prime$. The following lemma is proved in \cite[1.2]{dBN}.

\begin{lemma} \label{lemma-aznar}1) The obvious functor $\Chow(k)_{\bbQ}\to \Chow(k)_{\bbQ}^\prime$ is an equivalence of categories.

2) Let $X$ be a smooth projective variety with an action of a finite group $G$. Consider the motive $(X,p)\in \Chow(k)_{\bbQ}$ as in Example \ref{ex-proj-gr}. Then the motives $(X,p)$ and $(X/G,{\bf 1})\in \Chow (k)^\prime _{\bbQ}$ are isomorphic.
\end{lemma}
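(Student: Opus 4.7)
The plan is to prove part (2) directly by exhibiting mutually inverse morphisms in $\Chow(k)_{\bbQ}^\prime$, and then deduce part (1) almost formally from (2). Let $\pi\colon X\to X/G$ denote the quotient morphism, set $d=\dim X=\dim X/G$, and let $p=\frac{1}{|G|}\sum_{g\in G}\Gamma_g\in A^d(X\times X)_{\bbQ}$ be the symmetrization projector of Example~\ref{ex-proj-gr}. Let $\Gamma_\pi\subset X\times (X/G)$ be the graph of $\pi$, with transpose $\Gamma_\pi^t\subset (X/G)\times X$. I would propose the two morphisms
\[
\alpha:=[\Gamma_\pi]\in A^d(X\times (X/G))_{\bbQ},\qquad \beta:=\tfrac{1}{|G|}[\Gamma_\pi^t]\in A^d((X/G)\times X)_{\bbQ}
\]
as the candidate isomorphism $(X,p)\isom (X/G,\mathbf{1})$ and its inverse.

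The verification breaks into four claims. \emph{First}, $\alpha\circ p=\alpha$ and $p\circ \beta=\beta$, which hold because $\pi\circ g=\pi$ for every $g\in G$, so $\Gamma_\pi\circ\Gamma_g=\Gamma_\pi$ and $\Gamma_g^t\circ \Gamma_\pi^t=\Gamma_\pi^t$ as cycles; averaging gives the two identities and shows that $\alpha,\beta$ are bona fide morphisms between $(X,p)$ and $(X/G,\mathbf{1})$. \emph{Second}, $\alpha\circ\beta=\mathbf{1}_{X/G}$: this is the projection formula $\pi_*\pi^*=\deg(\pi)=|G|$ applied to the composition of correspondences, so division by $|G|$ in $\beta$ gives the diagonal of $X/G$. \emph{Third}, $\beta\circ\alpha=p$: at the level of cycles, $\Gamma_\pi^t\circ\Gamma_\pi$ is the scheme-theoretic fiber product $X\times_{X/G}X=\bigcup_{g\in G}\Gamma_g$, and after rational scaling by $1/|G|$ this equals $p$. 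These computations use only the standard composition of correspondences and the projection formula in the rational Chow theory extended to the quotient $X/G$ (which is licit by \cite[Example 16.1.13]{Fulton}).

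Once part (2) is established, part (1) follows by checking full faithfulness and essential surjectivity. Full faithfulness is immediate from the definition of morphisms in the two categories, which are both given by rational Chow groups of products. For essential surjectivity, an object of $\Chow(k)_{\bbQ}^\prime$ has the form $(X',q)$ with $X'=X/G$ for some $X\in V(k)$ carrying a $G$-action and $q\in A(X'\times X')_{\bbQ}$ a projector. By part (2), $(X/G,\mathbf{1})\cong (X,p)$, and under this isomorphism $q$ transports to a projector $\widetilde{q}\in pA(X\times X)_{\bbQ}p$; then $(X',q)\cong (X,\widetilde{q})$ lies in the image of $\Chow(k)_{\bbQ}$. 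The main obstacle I anticipate is the identity $\beta\circ\alpha=p$, since $X/G$ is singular and the composition of correspondences involves an intersection on $(X/G)\times (X/G)$; this requires the projection formula to be invoked in the extended framework of \cite[Example 16.1.13]{Fulton} rather than in the smooth setting, and careful bookkeeping to match the orbit decomposition $X\times_{X/G}X=\bigsqcup_g \Gamma_g$ (with appropriate multiplicities at fixed points) against the averaging defining $p$.
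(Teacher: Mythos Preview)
Your argument is correct. The paper does not supply its own proof of this lemma but simply cites \cite[1.2]{dBN}; the construction you give---mutually inverse correspondences $[\Gamma_\pi]$ and $\tfrac{1}{|G|}[\Gamma_\pi^t]$, with $\alpha\circ\beta=\mathbf{1}_{X/G}$ coming from $\pi_*\pi^*=|G|\cdot\id$ and $\beta\circ\alpha=p$ coming from the cycle identity $[X\times_{X/G}X]=\sum_{g\in G}[\Gamma_g]$---is exactly the classical argument recorded there, and your deduction of part~(1) from part~(2) by transporting projectors along this isomorphism is the expected one.
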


We remark that the measures $\mtot$ and $\ntot$ defined in \S4 factor through $K_0(\Chow(k)[1/n])$ if $\ell\nmid n$.
Indeed, by a theorem of Gillet and Soul\'e \cite[Theorem 4]{GS}, the correspondence $X\mapsto (X,{\bf 1})$ for a smooth and projective $X$ extends to a group homomorphism $\theta :K_0(\Var_k)\to K_0(\Chow(k))$, where $K_0(\Chow(k))$ is the Grothendieck group of the additive category $\Chow(k)$. Denote by $\theta [1/n]$ the composition of $\theta$ with the obvious homomorphism
$K_0(\Chow(k))\to K_0(\Chow (k))[1/n]$. 

Assume that the base field $k$ is a subfield of $\bbC$. The additive functor $\Phi _l$ of Proposition  \ref{prop-exist-real} 
induces the group homomorphism 
$$K_0(\Phi _l):K_0(\Chow (k)[1/n])\to \K(\one\text{-}\Gal_k)$$
such that we have the equality
\begin{equation}\label{equality} 
\mtot=K_0(\Phi _l)\circ \theta [1/n]:K_0(\Var_k)\to \K(\one\text{-}\Gal_k)
\end{equation}
and hence also 
$$\ntot=\Res _{\Gal _{k(\zeta _l)}}^{\Gal_k}\circ K_0(\Phi _l)\circ \theta [1/n]:K_0(\Var_k)\to \K(\one\text{-}\Gal _{k(\zeta _l)})$$

We obtain the following important corollary which is used in the proof of our main Theorem~\ref{main} below. 

\begin{cor} 
\label{bounded-quot}
Assume that $k\subset \bbC$ and let $X$ be a smooth projective variety over $k$ with an action of a finite group $G$. Then for all primes $l$ 
sufficiently large in terms of $|G|$ we have 
$$\mtot([X/G])=[H^\bullet (\bar{X},\one)^G]\in \K(\one\text{-}\Gal _k)$$ and hence also  
$$\ntot([X/G])=[H^\bullet (\bar{X},\one)^G]\in \K(\one\text{-}\Gal _{k(\zeta _l)})$$
\end{cor}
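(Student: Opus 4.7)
The plan is to combine the factorization $\mtot = K_0(\Phi_l) \circ \theta[1/n]$ from equality~(\ref{equality}) with the identification of the motive of a finite quotient supplied by Lemma~\ref{lemma-aznar}.

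First, I would fix $n = |G|$ and restrict attention to primes $l > |G|$, so that $l$ is coprime to $n$ and different from $\mathrm{char}(k) = 0$. With this choice, the averaging projector $p = \tfrac{1}{|G|}\sum_{g\in G}\Gamma_g$ defines an object $(X,p) \in \Chow(k)[1/n]$ as in Example~\ref{ex-proj-gr}, the realization functor $\Phi_l$ of Proposition~\ref{prop-exist-real} is available on this category, and the equality~(\ref{equality}) applies.

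The key step is to establish
$$\theta[1/n]([X/G]) = [(X,p)] \in K_0(\Chow(k)[1/n]).$$
In the enlarged category $\Chow(k)_{\bbQ}'$, the quotient variety carries the motive $(X/G,\mathbf{1})$, and Lemma~\ref{lemma-aznar}(2) supplies an isomorphism $(X,p) \cong (X/G,\mathbf{1})$ there. The equivalence in Lemma~\ref{lemma-aznar}(1) transports this isomorphism back to $\Chow(k)_{\bbQ}$, and Corollary~\ref{cor-from-real} then witnesses the isomorphism in $\Chow(k)[1/n']$ for some integer $n'$ divisible by $|G|$; absorbing $n'$ into $n$ (which only strengthens the hypothesis $l\nmid n$, still achievable by the ``sufficiently large'' assumption on $l$), we get the displayed equality of classes in $K_0$.

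Once this is in hand, applying $K_0(\Phi_l)$ to both sides and using Example~\ref{example-quot} yields
$$\mtot([X/G]) = K_0(\Phi_l)([(X,p)]) = [\Phi_l(X,p)] = [H^\bullet_{\et}(\bar X,\one)^G],$$
and the statement for $\ntot$ follows by applying $\Res_{\Gal_{k(\zeta_l)}}^{\Gal_k}$. The main obstacle is justifying the motivic equality in the middle step: Gillet--Soul\'e's $\theta$ is directly transparent only on smooth projective varieties, whereas $X/G$ is typically singular, so one cannot simply invoke $\theta([X/G]) = [(X/G,\mathbf{1})]$ in $\Chow(k)$. The extension to $\Chow(k)_{\bbQ}'$ and Lemma~\ref{lemma-aznar} were set up precisely to bridge this gap, reducing the singular class $[(X/G,\mathbf{1})]$ to the manageable smooth-projective class $[(X,p)]$ after inverting $|G|$, at which point the rest is immediate.
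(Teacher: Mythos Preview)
Your proposal is correct and takes essentially the same approach as the paper: factor $\mtot$ through $K_0(\Chow(k)[1/m])$ via equality~(\ref{equality}), identify $(X/G,\mathbf{1})$ with $(X,p)$ using Lemma~\ref{lemma-aznar} and Corollary~\ref{cor-from-real}, and then evaluate with Example~\ref{example-quot}. The paper writes the chain $\theta[1/m]([X/G]) = [(X,p)]$ without further comment, so your explicit flagging of this step as the one requiring care is, if anything, an improvement in exposition.
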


\begin{proof} By \ref{equality} it suffices to prove that for some $m$ and all $l$ sufficiently large in terms of $|G|$ we have 
$$K_0(\Phi _l)\circ \theta [1/m]([X/G])=[H^\bullet (\bar{X},\one)^G]\in \K(\one\text{-}\Gal _k)$$
Let $\vert G\vert =n$ and let $(X,p)\in \Chow (k)[1/n]$ be as in Example \ref{ex-proj-gr}. By Lemma \ref{lemma-aznar} there is an isomorphism of objects $(X/G ,{\bf 1})\simeq (X,p)$ in the equivalent categories $\Chow ^\prime (k)_{\bbQ}\simeq \Chow (k)_\bbQ$. Hence the same isomorphism holds in $\Chow (k)[1/m]$ for a divisible enough $m$ (Corollary \ref{cor-from-real}). 
Fix one such $m$ and let $l$ be a prime not dividing $m$. Then
\begin{align*}
\mtot([X/G])=K_0(\Phi _l)\circ \theta [1/m]([X/G])&=K_0(\Phi _l)([(X,p)]) \\
&=[\Phi _l((X,p))]=[H^\bullet (\bar X,\one)^G]
\end{align*}
where the last equality is by Example \ref{example-quot}.
This proves the corollary.
\end{proof}

\section{The main theorem}

In this section, we prove the main result of this paper:

\begin{thm}
\label{main}
There exists a K3 surface $X/\Q$ such that 
$$\zeta_{X}(t)\in K_0[\Var_{\Q}][1/\L][[t]]$$ 
is irrational in the sense that 
if $B(t)$ is a polynomial with coefficients in $K_0[\Var_{\Q}][1/\L][t]$ and $B(0)=1$, then $B(t)\zeta_{X}(t)$ is not a polynomial.
\end{thm}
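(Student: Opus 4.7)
The plan is to derive a contradiction by assuming rationality, using the machinery assembled in Sections~2--6 to convert the putative rational structure of $\zeta_{X'}(t)$ into an effectiveness claim on $\ntot([(X')^{[\ell]}])$ that is incompatible with the smoothness and projectivity of the Hilbert scheme. Let $X'$ be the K3 surface from Theorem~\ref{K3}. Suppose for contradiction that there exists $B(t)\in K_0(\Var_\Q)[1/\L][t]$ with $B(0)=1$ and $A(t):=B(t)\zeta_{X'}(t)\in K_0(\Var_\Q)[1/\L][t]$.

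First I would choose a prime $\ell$ large enough to simultaneously trigger Theorem~\ref{K3}, to satisfy $\ell>\deg A+\deg B+24$, and to invoke Corollary~\ref{bounded-quot} for the $\Sigma_n$-action on $(X')^n$ for every $n\le\deg A+\deg B+24$. Since $\mtot(\L)=[\one(1)]$ and the cyclotomic character is trivial on $G_{\Q(\zeta_\ell)}$, we have $\ntot(\L)=[\one]$, the multiplicative unit of $\K(G_{\Q(\zeta_\ell)})$, so $\ntot$ extends to $K_0(\Var_\Q)[1/\L]$. Writing $G:=\SL_2(\F_\ell)^2$ and identifying $\Gal(K_1K_2/\Q(\zeta_\ell))$ with $G$ via Theorem~\ref{K3}, all relevant classes live in $\K(G)\subset\K(G_{\Q(\zeta_\ell)})$. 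Applying Corollary~\ref{bounded-quot} and Corollary~\ref{Symm}, for each $n\le\deg A+\deg B+24$ I would compute
$$a_n:=\ntot([\Sym^n X'])=[\Sym^n((V_1\boxtimes V_1)\oplus\one^{20})]\in\K(G).$$
The relation $B(t)\sum_n a_n t^n=A(t)$ is then exactly the hypothesis of Lemma~\ref{key} with $k=20$, so the lemma yields $a_n\in\K(G)$ for all $n\ge 0$ together with the key congruence
$$a_\ell\equiv[\indec\boxtimes\indec]-[\indec\boxtimes V_{\ell-2}]-[V_{\ell-2}\boxtimes\indec]\pmod{F_{\ell-1}\K(G)}.$$

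Next I would apply G\"ottsche's identity (Theorem~\ref{Gottsche-Th}) and push it through $\ntot$. Because $\ntot(\L)=1$, the identity becomes
$$\sum_{n=0}^\infty b_n t^n=\prod_{r=1}^\infty\sum_{j=0}^\infty a_j t^{jr},\qquad b_n:=\ntot([(X')^{[n]}]),$$
so Lemma~\ref{keyb} applies and gives the same reduction for $b_\ell$:
$$b_\ell\equiv[\indec\boxtimes\indec]-[\indec\boxtimes V_{\ell-2}]-[V_{\ell-2}\boxtimes\indec]\pmod{F_{\ell-1}\K(G)}.$$
This yields the desired contradiction: on the one hand, $(X')^{[\ell]}$ is smooth and projective, so $b_\ell=\sum_i[H^i(\overline{(X')^{[\ell]}},\one)]$ is an \emph{effective} class in $\K(G_{\Q(\zeta_\ell)})$; on the other hand, the external tensor products $\indec\boxtimes V_{\ell-2}$ and $V_{\ell-2}\boxtimes\indec$ are indecomposable (by Proposition~2.1 together with Proposition~\ref{facts}) and lie outside $F_{\ell-1}\K(G)$, so the right-hand side above cannot be written as a non-negative integer combination of indecomposables modulo $F_{\ell-1}\K(G)$, hence is non-effective.

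The main obstacle will be the bookkeeping of selecting $\ell$ uniformly: it must exceed the Serre-image threshold for $(E_1,E_2)$ from Proposition~\ref{E1E2}, the coprimality threshold of Corollary~\ref{bounded-quot} applied to the $\Sigma_n$-actions for every $n\le\deg A+\deg B+24$, and the numerical bound of Lemma~\ref{key}. Once such an $\ell$ is fixed the argument runs formally, with the G\"ottsche identity acting as the bridge that transfers the ``$t^\ell$-defect'' of the naive rationality of $\zeta_{V_1\boxtimes V_1}(t)$ (highlighted in Section~2) from the singular symmetric powers to the smooth Hilbert schemes, where non-effectiveness becomes impossible.
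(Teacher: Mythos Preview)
Your proposal is correct and follows essentially the same approach as the paper: assume rationality, choose $\ell$ large relative to $\deg A+\deg B+24$ and the Chow-motive denominators, use Theorem~\ref{K3} together with Corollaries~\ref{Symm} and~\ref{bounded-quot} to identify $\ntot([\Sym^n X'])$ for small $n$, feed this into Lemmas~\ref{key} and~\ref{keyb} via Theorem~\ref{Gottsche-Th}, and contradict the effectiveness of $\ntot([(X')^{[\ell]}])$. In fact you spell out two points the paper leaves implicit---that $\ntot(\L)=1$ so $\ntot$ extends to $K_0(\Var_\Q)[1/\L]$, and why the congruence of Lemma~\ref{keyb} forces non-effectiveness via Proposition~2.1---so your write-up is, if anything, slightly more complete.
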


\begin{proof}
Choose $X$ to be the variety denoted $X'$ in Proposition~\ref{K3}.  As usual we let $X^{[i]}$ denote the $i$th Hilbert scheme of points on $X$.
We assume the theorem is not true and choose $B(t)$ with $B(0)=1$ such that $A(t) := B(t)\zeta_{X}(t)$ is a polynomial.  Let $n=\deg A+\deg B + 24$.  

We fix a prime $\ell$ sufficiently large  that:

\begin{enumerate}

\item The homomorphism (\ref{key-group}) is an isomorphism, i.e., $\Gal(K_1 K_2/\Q(\zeta_\ell))$ is isomorphic to $\SL_2(\F_\ell)^2$.
\item For all $i\le n$, we have $\ntot([\Sym^i X]) = [\Sym^i H^\bullet(\bar X^n,\one)]$.
\end{enumerate}
For large enough $\ell$, (1) holds by Theorem~\ref{K3}, and (2) holds by Corollary~\ref{Symm} and Corollary~\ref{bounded-quot}.

We set $G := \SL_2(\F_\ell)^2$, which we identify with $\Gal(K_1 K_2 /\Q(\zeta_\ell))$.  Thus, for $1\le i\le n$, we have
\begin{equation}
\label{X1}
a_i := \ntot([\Sym^i X]) = \Res^G_{G_{\Q(\zeta_\ell)}}[\Sym^i(\one^{20}\oplus V_1\boxtimes V_1)] \in \Res_{G_{\Q(\zeta_\ell)}}^G \K(G).
\end{equation}
Let $b_i := \ntot([X^{[i]}])$ for $i\ge 0$.  Thus $b_i$ is effective for all $i$, and by Theorem~\ref{Gottsche-Th},
$a_i$ and $b_i$ satisfy
the identities  
given by (\ref{prod}).  

Applying  Lemma~\ref{keyb} with $R = \K(G_{\Q(\zeta_\ell)})$,  which contains $\K(G)$ via the map 
$\Res_{G_{\Q(\zeta_\ell)}}^G$,
we deduce that $\ntot(X^{[\ell]})$ is not effective, which is
a contradiction.

\end{proof}

\end{document}